\def\phi{\varphi}
\def\rho{\varrho}
\def\epsilon{\varepsilon}
\numberwithin{equation}{section}
\theoremstyle{plain}
\newtheorem{theorem}{Theorem}[section]
\newtheorem{lemma}[theorem]{Lemma}
\newtheorem{definition}[theorem]{Definition}
\renewcommand{\leq}{\leqslant}
\renewcommand{\geq}{\geqslant}
\begin{document}

\title[
Commutator estimates for vector fields on \ $B_{p(\cdot ),q(\cdot )}^{s
(\cdot )}$ spaces]{Commutator estimates for vector fields on Besov spaces with variable smoothness and integrability }
\author[ S. BenMahmoud]{ Salah BenMahmoud}

\begin{abstract}
In this paper we present certain bilinear estimates for commutators on
Besov spaces with variable smoothness and integrability, and
under no vanishing assumptions on the divergence of vector fields. 
Such commutator estimates are motivated by the study of well-posedness results for some models in incompressible fluid mechanics.\newline

MSC classification: 46E35, 42B37.\newline

Key words and phrases: Commutator. Vector fields. Besov space.  Variable exponent
\end{abstract}

\date{December, 7. 2023 }

\subjclass[2000]{ 46E35 , 42B37.}
\keywords{Commutator. Vector fields. Besov space.  Variable exponent\\
\\
 Salah BenMahmoud \\
M'sila University, Department of Mathematics, Laboratory of Functional
Analysis and Geometry of Spaces , P.O. Box 166, M'sila 28000, Algeria.\\
e-mails:\texttt{salahmath2016@gmail.com/salah.benmahmoud@univ-msila.dz(Salah BenMahmoud).}}
\maketitle

\section{\textbf{Introduction}}
Let $ V=(V_{1},...,V_{n})$ be a smooth vector field in $\mathbb{R}^{n}$ and let $\Delta _{j}f=\varphi _{j}\ast f,j\in \mathbb{N}_{0}$ where $(\mathcal{F}\varphi _{j})_{j\in \mathbb{N}_{0}}$ is a smooth dyadic
resolution of unity, the estimates of the   commutator
\begin{equation}
\lbrack V\cdot \nabla ,\Delta _{j}]f=\sum_{k=1}^{n}V_{k}\partial _{k}\Delta
_{j}f-\Delta _{j}(V_{k}\partial _{k}f),   \label{HN-est}
\end{equation}%
is considered one of the main tools to study well-posedness, existence and uniqueness of solutions for many types of partial differential equations over  function spaces  such as  Euler equations, Navier–Stokes equations and Boussinesq system, see for
example the papers \cite{Ch02,Chen06,RD1}, the monograph  \cite{BCD}   and the references therein. The estimates are usually   proved by means of paraproducts and under the assumption that $V$ is divergence-free. 

In \cite{HN18}, the  authors developed new unifying approach to  estimate the commutator (\ref{HN-est})  over  various function spaces; weighted and variable exponent Lebesgue,Triebel-Lizorkin, and Besov spaces. 
This approach didn't use paraproducts but it was based on \cite[Lemma 3.1]{HN18}  and   duality arguments  such as the norm duality of $L^{p\left( \cdot \right)}(\ell^{q})$ and $\ell ^{q}(L^{p\left( \cdot \right)})$  stated in \cite[Lemma 6.1]{HN18}.  The estimates were obtained under no vanishing assumptions on the divergence of the vector field. In  particular, the estimates obtained on variable Triebel-Lizorkin and Besov spaces where restricted to the scales $F_{p(\cdot ),q}^{s}$ and $B_{p(\cdot ),q}^{s}$ with only constant  indices $q$ and $s$ , not variable functions. this is mainly due to employing the maximal operator which  is bounded only on $L^{p\left( \cdot \right)}(\ell^{q})$ and $\ell ^{q}(L^{p\left( \cdot \right)})$ when $q$ is a constant and $p$  satisfies certain requirements.

Later, to obtain more general estimates on   Triebel-Lizorkin spaces with variable smoothness and integrability
$F_{p(\cdot ),q(\cdot)}^{s(\cdot)}$ and allow all the indices to be variables and since 
the maximal operator is not bounded on  $L^{p\left( \cdot \right)}(\ell^{q(\cdot)})$  the authors in 
\cite{art1} pursued a  different approach to overcome this difficulty and others through the use of  
\cite[Lemma 2]{art1}, obtain a more generalized assertion to \cite[Lemma 6.1]{HN18}  introduced in   \cite[Lemma 4]{art1}, and impose some regularity assumptions on the indices. In this paper, Lemmas 
\ref{equa thm1} and \ref{duality2} are the corresponding lemmas to 
\cite[Lemma 2]{art1}  and 
\cite[Lemma 4]{art1} for $\ell ^{q(\cdot)}(L^{p\left( \cdot \right)})$, respectively.

 Our main goals  in this paper are; first, prove the  duality argument presented in Lemma \ref{duality2} which
   is an important result for the variable scales $\ell ^{q(\cdot)}(L^{p\left( \cdot \right)})$, it is 
useful to deal with the complicated  norm of this scales in a different way, the second goal is estimating  the commutator (\ref{HN-est}) on 
$B_{p(\cdot ),q(\cdot)}^{s(\cdot)}$ and generelize the results of \cite{HN18} so that all the indices $s,p$ and $q$ can be functions, the results here cover  the cases where $p^-=1$ or $p^+=\infty$, also,  $p$ or $q$ can be $\infty$ on some subsets of $\mathbb{R}^n$, these last introduces further complications and extra challenges. The proofs are 
written clearly and each step is  explained well that is easy to follow and understand.

The remainder of this paper is organized as follows. In Section \ref{Sect2}, we set some notation and present definitions and basic results about Besov spaces with variable smoothness and integrability.
 The Section \ref{secc1} is focused on presenting the norm duality of  $\ell ^{q(\cdot)}(L^{p(\cdot )})$, we prove the generalization of \cite[Lemma 6.1]{HN18} which was stated only for 
$\ell ^{q}(L^{p(\cdot )})$ where $q\in[1,\infty]$ is constant and $p$ is a bounded variable exponent with $p^->1$.   In  Section \ref{Mainsec} we begin by proving preliminary lemmas and then we employ them to prove the  main results, Theorems \ref{result1}, \ref{result2} and \ref{result3}.
\section{\textbf{Preliminaries}}
\label{Sect2}
Now, we present some notations. As usual, we denote by $\mathbb{R}^{n}$ the $n$-dimensional real Euclidean
space, $\mathbb{N}$ the collection of all natural numbers and $\mathbb{N}%
_{0}=\mathbb{N}\cup \{0\}$. For a multi-index $\alpha =(\alpha
_{1},...,\alpha _{n})\in \mathbb{N}_{0}^{n}$, we write $\left\vert \alpha
\right\vert =\alpha _{1}+...+\alpha _{n}$. The notation $f\lesssim g$ means that $%
f\leq c\,g$ for some independent positive constant $c$ (and non-negative
functions $f$ and $g$), and $f\approx g$ means that $f\lesssim g\lesssim f$.

If $E\subset {\mathbb{R}^{n}}$ is a measurable set, then $|E|$ stands for
the Lebesgue measure of $E$ and $\chi _{E}$ denotes its characteristic
function. By $c$ we denote generic positive constants, which may have
different values at different occurrences. Although the exact values of the
constants are usually irrelevant for our purposes, sometimes we emphasize
their dependence on certain parameters (e.g., $c(p)$ means that $c$ depends
on $p$, etc.).

  Let $\mathbf{f}=(f_{1},...,f_{n})\in X^{n}$ for some normed space $X$. Then we put%
$\big\|\mathbf{f}\big\|_{X}=\sum_{i=1}^{n}\big\|f_{i}\big\|_{X}$.

The symbol $\mathcal{S}(\mathbb{R}^{n})$ is used in place of the set of all
Schwartz functions on $\mathbb{R}^{n}$. We define the Fourier transform of a
function $f\in \mathcal{S}(\mathbb{R}^{n})$ by 
\begin{equation*}
\mathcal{F}(f)(\xi ):=(2\pi )^{-n/2}\int_{\mathbb{R}^{n}}e^{-ix\cdot \xi
}f(x)dx,\quad \xi \in \mathbb{R}^{n}.
\end{equation*}
The Hardy-Littlewood maximal operator $\mathcal{M}$ is defined for a locally integrable function $ f \in L_{%
\mathrm{loc}}^{1}$ by
\begin{equation*}
\mathcal{M}f(x)=\sup_{r>0}\frac{1}{\left\vert B(x,r)\right\vert }%
\int_{B(x,r)}\left\vert f(y)\right\vert dy.
\end{equation*}

We denote by $\mathcal{S}^{\prime }(\mathbb{R}^{n})$ the dual space of all
tempered distributions on $\mathbb{R}^{n}$. The variable exponents that we
consider are always measurable functions $p$ on $\mathbb{R}^{n}$ with range
in $[1,\infty ]$, we denote the set of all such
functions by $\mathcal{P}(\mathbb{R}^{n})$. For $p\in \mathcal{P}(\mathbb{R}^{n})$
the conjugate exponent of $p$ denoted by ${p}^{\prime }$ 
 is  given by $\frac{1}{{p(\cdot )}}+\frac{1}{{p}%
^{\prime }{(\cdot )}}=1$ with the convention  $\frac{1}{\infty} = 0$.
 We use the standard notations: 
\begin{equation*}
p^{-}:=\underset{x\in \mathbb{R}^{n}}{\text{ess-inf}}\,p(x)\quad \text{and}%
\quad p^{+}:=\underset{x\in \mathbb{R}^{n}}{\text{ess-sup}}\,p(x).
\end{equation*}%
The function spaces in this paper are fit into the framework of  semi-modular spaces, see for example \cite[Chapter 2]{DHHR} and \cite{Orli-spa}. The function $\omega _{p}$ is defined as follows:
\begin{equation*}
\omega _{p}(t)=\left\{ 
\begin{array}{ccc}
t^{p} & \text{if} & p\in \lbrack 1,\infty )\text{ and }t>0, \\ 
0 & \text{if} & p=\infty \text{ and }0<t\leq 1, \\ 
\infty & \text{if} & p=\infty \text{ and }t>1.%
\end{array}%
\right.
\end{equation*}%
The convention $1^\infty=0$ is adopted in order that $\omega _{p}$ be left-continuous. The variable exponent modular is defined by 
\begin{equation*}
\varrho _{p(\cdot )}(f):=\int_{\mathbb{R}^{n}}\omega _{p(x)}(|f(x)|)\,dx.
\end{equation*}%
The variable exponent Lebesgue space $L^{p(\cdot )}$\ consists of measurable
functions $f$ on $\mathbb{R}^{n}$ such that $\varrho _{p(\cdot )}(\lambda
f)<\infty $ for some $\lambda >0$. We define the Luxemburg (quasi)-norm on
this space by the formula 
\begin{equation*}
\left\Vert f\right\Vert _{p(\cdot )}:=\inf \Big\{\lambda >0:\varrho
_{p(\cdot )}\Big(\frac{f}{\lambda }\Big)\leq 1\Big\}.
\end{equation*}%
We have  $\left\Vert f\right\Vert _{p(\cdot )}\leq 1$ if
and only if $\varrho _{p(\cdot )}(f)\leq 1$, see \cite[Lemma 3.2.4]{DHHR}. By \cite[Lemma 3.2.8]{DHHR},  for a sequence of measurable functions $(f_{j})_{j\in \mathbb{N}_{0}}$ and a measurable function $f$ if $|f_j|\nearrow |f|$, then
\begin{align}\label{pcv-eq}
\varrho _{p(\cdot )}(f)=\lim_j \varrho _{p(\cdot )}(f_j).
\end{align}

Let $p,q\in \mathcal{P}(\mathbb{R}^{n})$. The mixed Lebesgue-sequence space $\ell ^{q(\cdot )}(L^{p(\cdot )})$ is defined on sequences of $%
L^{p(\cdot )}$-functions by the modular 
\begin{equation*}
\varrho _{\ell ^{q(\cdot )}(L^{p\left( \cdot \right)
})}((f_{j})_{j\in \mathbb{N}_{0}}):=\sum_{j=0}^{\infty }\inf \Big\{\lambda _{j}>0:\varrho
_{p(\cdot )}\Big(\frac{f_{j}}{\lambda _{j}^{1/q(\cdot )}}\Big)\leq 1\Big\},
\end{equation*}%
with the convention  $\lambda^{1/\infty} = 1$. The (quasi)-norm is defined from this as usual:%
\begin{equation}
\left\Vert \left( f_{j}\right) _{j\in \mathbb{N}_{0}}\right\Vert _{\ell ^{q(\cdot
)}(L^{p\left( \cdot \right) })}:=\inf \Big\{\mu >0:\varrho _{\ell ^{q(\cdot
)}(L^{p(\cdot )})}\Big(\frac{1}{\mu }(f_{j})_{j\in \mathbb{N}_{0}}\Big)\leq 1\Big\}.
\label{mixed-norm}
\end{equation}%
 In particular, if $p(\cdot)=\infty $, then we can replace $\mathrm{\eqref{mixed-norm}}$ by the expression
\begin{equation}
\varrho _{\ell ^{q(\cdot )}(L^{\infty})}((f_{j})_{j\in \mathbb{N}_{0}})=\sum_{j=0}^{\infty
} \underset{x\in \mathbb{R}^{n}}{\text{ess-sup}}\, |f_{j}(x)|^{q(x )}, \label{ess sup mod}
\end{equation}%
and the case    $q(x)=\infty$ is included by the convention $t^\infty = \infty\chi_{]1;\infty]}(t)$. 
If $q(\cdot)=\infty$ then 
\begin{equation}
\left\Vert \left( f_{j}\right) _{j\in \mathbb{N}_{0}}\right\Vert _{\ell ^{\infty}(L^{p\left( \cdot \right) })}=
\sup_{j\in \mathbb{N}_{0}}\big\| f_j \big\|_{p(\cdot)}.
\label{q=infty norm}
\end{equation}%

We recall some  useful properties, we have  $\| \left( f_{j}\right) _{j\in \mathbb{N}_{0}}\| _{\ell ^{q(\cdot
)}(L^{p\left( \cdot \right) })}\leq 1$ if
and only if $\varrho _{\ell ^{q(\cdot )}(L^{p\left( \cdot \right)
})}((f_{j})_{j\in \mathbb{N}_{0}}) \leq 1$. The first property (i) of the following lemma is from \cite{AH} while the second (ii) can be proven easily by  (\ref{ess sup mod}).
\begin{lemma}\label{cont-lem} For $p,q\in \mathcal{P}(\mathbb{R}^{n})$, we have\\ 
(i) if $p^+,q^+ <\infty$ then the function $\mu:]0;+\infty[ \to \varrho _{\ell ^{q(\cdot )}(L^{p\left( \cdot \right)
})}((f)_{j\in \mathbb{N}_{0}}/\mu)$ is continuous for every $(f)_{j\in \mathbb{N}_{0}} \in \ell ^{q(\cdot )}(L^{p\left( \cdot \right)})$;\\
(ii) the function $\mu:]0;+\infty[ \to \varrho _{\ell ^{q(\cdot )}(L^{\infty})}((f)_{j\in \mathbb{N}_{0}}/\mu)$ is continuous  when $q^+ <\infty$ for every $(f)_{j\in \mathbb{N}_{0}} \in \ell^{q(\cdot )}(L^{\infty})$.
\end{lemma}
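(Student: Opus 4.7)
Since (i) is taken directly from \cite{AH}, I will only sketch (ii). The plan is to write
\[
\varrho_{\ell^{q(\cdot)}(L^{\infty})}\!\bigl((f_j)_j/\mu\bigr) \;=\; \sum_{j=0}^{\infty} M_j(\mu),\qquad M_j(\mu):=\esssup_{x}\,\bigl|f_j(x)/\mu\bigr|^{q(x)},
\]
and to pass continuity in $\mu$ from each term through the infinite sum by dominated convergence for series. The central elementary bound, in which the hypothesis $q^{+}<\infty$ enters decisively, is the pointwise two-sided estimate
\[
\min\!\bigl((\mu_0/\mu)^{q^-},(\mu_0/\mu)^{q^+}\bigr)\;\le\;(\mu_0/\mu)^{q(x)}\;\le\;\max\!\bigl((\mu_0/\mu)^{q^-},(\mu_0/\mu)^{q^+}\bigr),
\]
valid for all $\mu,\mu_0>0$.

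First I will verify that the modular is finite at every $\mu>0$. By the membership of $(f_j)_j$ in $\ell^{q(\cdot)}(L^{\infty})$ there is some $\mu^{*}>0$ at which the modular is bounded by $1$; combining the identity $|f_j(x)/\mu|^{q(x)}=(\mu^{*}/\mu)^{q(x)}|f_j(x)/\mu^{*}|^{q(x)}$ with the above bound transfers finiteness from $\mu^{*}$ to any $\mu>0$. Next, fixing $\mu_0>0$, the same rewriting yields $M_j(\mu)=\esssup_x(\mu_0/\mu)^{q(x)}|f_j(x)/\mu_0|^{q(x)}$ and then sandwiches
\[
\min\!\bigl((\mu_0/\mu)^{q^-},(\mu_0/\mu)^{q^+}\bigr)\, M_j(\mu_0)\;\le\; M_j(\mu)\;\le\;\max\!\bigl((\mu_0/\mu)^{q^-},(\mu_0/\mu)^{q^+}\bigr)\, M_j(\mu_0);
\]
both envelopes collapse to $M_j(\mu_0)$ as $\mu\to\mu_0$, giving termwise continuity. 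Finally, restricting $\mu$ to the compact interval $[\mu_0/2,2\mu_0]$ upgrades the right-hand bound to the uniform majorant $M_j(\mu)\le 2^{q^+}M_j(\mu_0)$, whose sum over $j$ equals $2^{q^+}\varrho_{\ell^{q(\cdot)}(L^{\infty})}\!\bigl((f_j)_j/\mu_0\bigr)<\infty$ by the preceding step.

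Dominated convergence applied to the series then yields $\varrho_{\ell^{q(\cdot)}(L^{\infty})}\!\bigl((f_j)_j/\mu\bigr)\to\varrho_{\ell^{q(\cdot)}(L^{\infty})}\!\bigl((f_j)_j/\mu_0\bigr)$ as $\mu\to\mu_0$, which is the claim. I expect the main subtlety to be the handling of the variable exponent inside the power $(\mu_0/\mu)^{q(x)}$: the constant-$q$ case is immediate via a clean $\mu^{-q}$ prefactor, but variable $q(x)$ forces the pointwise sandwich, and the whole strategy collapses if $q^{+}=\infty$ because $(\mu_0/\mu)^{q^+}$ is then infinite whenever $\mu_0>\mu$, destroying both the termwise continuity and the summable envelope.
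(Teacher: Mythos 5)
Your proposal is correct and takes the same route the paper indicates: part (i) is cited from \cite{AH} exactly as the paper does, and part (ii) is the routine verification the paper declares ``easy'' from (\ref{ess sup mod}). The termwise sandwich $\min\bigl((\mu_0/\mu)^{q^-},(\mu_0/\mu)^{q^+}\bigr)M_j(\mu_0)\le M_j(\mu)\le\max\bigl((\mu_0/\mu)^{q^-},(\mu_0/\mu)^{q^+}\bigr)M_j(\mu_0)$, the transfer of finiteness from $\mu^{*}$ to arbitrary $\mu$, and the summable majorant $2^{q^+}M_j(\mu_0)$ on $[\mu_0/2,2\mu_0]$ are all sound, with $q^{+}<\infty$ invoked exactly where it is needed.
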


We say that a real valued-function $g$ on $\mathbb{R}^{n}$ is \textit{%
locally }log\textit{-H\"{o}lder continuous} on $\mathbb{R}^{n}$, abbreviated 
$g\in C_{\text{loc}}^{\log }(\mathbb{R}^{n})$, if there exists a constant $%
c_{\log }(g)>0$ such that 
\begin{equation}
\left\vert g(x)-g(y)\right\vert \leq \frac{c_{\log }(g)}{\log
(e+1/\left\vert x-y\right\vert )}  \label{lo-log-Holder}
\end{equation}%
for all $x,y\in \mathbb{R}^{n}$.

We say that $g$ satisfies the log\textit{-H\"{o}lder decay condition}, if
there exist two constants $g_{\infty }\in \mathbb{R}$ and $c_{\log }>0$ such
that%
\begin{equation*}
\left\vert g(x)-g_{\infty }\right\vert \leq \frac{c_{\log }}{\log
(e+\left\vert x\right\vert )}
\end{equation*}%
for all $x\in \mathbb{R}^{n}$. We say that $g$ is \textit{globally} log%
\textit{-H\"{o}lder continuous} on $\mathbb{R}^{n}$, abbreviated $g\in
C^{\log }(\mathbb{R}^{n})$, if it is\textit{\ }locally log-H\"{o}lder
continuous on $\mathbb{R}^{n}$ and satisfies the log-H\"{o}lder decay\textit{%
\ }condition.\textit{\ }The constants $c_{\log }(g)$ and $c_{\log }$ are
called the \textit{locally }log\textit{-H\"{o}lder constant } and the log%
\textit{-H\"{o}lder decay constant}, respectively\textit{.} We note that any
function $g\in C_{\text{loc}}^{\log }(\mathbb{R}^{n})$ always belongs to $%
L^{\infty }$.\vskip5pt

We define the following class of variable exponents: 
\begin{equation*}
\mathcal{P}^{\mathrm{log}}(\mathbb{R}^{n}):=\Big\{p\in \mathcal{P}(
\mathbb{R}^{n}):\frac{1}{p}\in C^{\log }(\mathbb{R}^{n})\Big\},
\end{equation*}%
which is introduced in \cite[Section 2]{DHHMS}. We define 
\begin{equation*}
\frac{1}{p_{\infty }}:=\lim_{|x|\rightarrow \infty }\frac{1}{p(x)},
\end{equation*}%
and we use the convention $\frac{1}{\infty }=0$. Note that although $\frac{1%
}{p}$ is bounded, the variable exponent $p$ itself can be unbounded. We put 
\begin{equation*}
\Psi \left( x\right) :=\sup_{\left\vert y\right\vert \geq \left\vert
x\right\vert }\left\vert \varphi \left( y\right) \right\vert
\end{equation*}%
for $\varphi \in L^{1}$. We suppose that $\Psi \in L^{1}$. Then it is
proved in $\text{\cite[Lemma \ 4.6.3]{DHHR}}$ that if $p\in \mathcal{P}^{%
\text{log}}(\mathbb{R}^{n})$, then 
\begin{equation*}
\Vert \varphi _{\varepsilon }\ast f\Vert _{{p(\cdot )}}\leq c\Vert \Psi
\Vert _{{1}}\Vert f\Vert _{{p(\cdot )}} 
\end{equation*}%
for all $f\in L^{p(\cdot )}$, where $\varphi _{\varepsilon }(\cdot):=\varepsilon ^{-n}\varphi (
\cdot /\varepsilon ) , \varepsilon >0.$
We put $\eta _{j,m}(x):=2^{jn}(1+2^{j}\left\vert x\right\vert )^{-m}$
for any $x\in \mathbb{R}^{n}$ and $m>0$, note that when $m>n$, $\eta _{j,m}\in
L^{1}$,  $\left\Vert \eta _{j,m}\right\Vert _{1}=c(m)$ and  if $p\in \mathcal{P}^{\text{log}}(\mathbb{R}^{n})$, then
\begin{equation} 
\Vert \eta _{j,m} \ast f\Vert _{{p(\cdot )}}\leq c \Vert f\Vert _{{p(\cdot )}} \label{eneq p norm}
\end{equation}
 are independent of $j$. It was shown in \cite[Theorem 4.3.8]{DHHR}  that $\mathcal{M}:L^{p(\cdot
)}\rightarrow L^{p(\cdot )}$ is bounded if $p\in \mathcal{P}^{\mathrm{log}}$
and $p^{-}>1$, this result was widely used  in \cite{art1} and in \cite[Section 6]{HN18}, but since we aim to allow the case $p^-=1$, this is not so helpful, therefore in this paper we don't use this result,  the maximal operator  $\mathcal{M}$ is replaced by $(\eta _{j,m})_{j\in \mathbb{N}_0}$ and the previous result is replaced by the inequality (\ref{eneq p norm}) or some closely  related inequalities.
 We refer to the recent monographs \cite{CF,DHHR} for further
properties, historical remarks and references on variable exponent Lebesgue spaces.

To define Besov spaces with variable smoothness and integrability,
let us first introduce the concept of a smooth dyadic
resolution of unity or dyadic decomposition of unity, see  \cite[Section 2.3.1]{T1}. Let $\Phi $\ be a function\ in $\mathcal{S}(%
\mathbb{R}
^{n})$\ satisfying $\Phi (x)=1$\ for\ $\left\vert x\right\vert \leq 1$\ and\ 
$\Phi (x)=0$\ for\ $\left\vert x\right\vert \geq 2$.\ We define  $\phi_0 $ and $\phi$ by $\mathcal{F}\phi_0=\Phi$ and $\mathcal{F}\phi
(x)=\Phi (x)-\Phi (2x)$\ and 
\begin{equation*}
\mathcal{F}\phi _{j}(x)=\mathcal{F}\phi (2^{-j}x)\quad \text{\textit{for}%
}\quad j \in \mathbb{N}.
\end{equation*}%
Then $\{\mathcal{F}\phi _{j}\}_{j\in \mathbb{N}_{0}}$\ is a smooth dyadic
resolution of unity, that is
\begin{itemize}
\item[\textit{(i)}] $\mathrm{supp}\,\mathcal{F}\phi_0 \subset \{x\in \mathbb{R}^{n}:\left\vert
x\right\vert \leq 2\}$;
\item[\textit{(ii)}] $\mathrm{supp}\,\mathcal{F}\phi \subset \big\{%
x\in \mathbb{R}^{n}:1/2\leq \left\vert x\right\vert \leq 2\big\} $
; and
\item[ \textit{(iii)}]  $\sum_{j =0}^{\infty }\mathcal{F}\phi _{j}(x)=1$ for all 
$x\in \mathbb{R}^{n}$,
\end{itemize}
any system of functions $\{\phi_{j},j \in \mathbb{N}_0\}\subset \mathcal{S}(\mathbb{R}^n)$ satisfies\textit{(i),(ii)} and \textit{(iii)} is called smooth dyadic
resolution of unity.
  Thus we obtain the Littlewood-Paley decomposition 
\begin{align*}
f=\sum_{j=0}^{\infty }\phi _{j}\ast f
\end{align*}%
for all $f\in \mathcal{S}^{\prime }(%
\mathbb{R}
^{n})$\ $($convergence in $\mathcal{S}^{\prime }(%
\mathbb{R}
^{n}))$.

We state the definition of the spaces $B_{p(\cdot ),q(\cdot )}^{s(\cdot )}$,
which was introduced and studied in~\cite{AH}.
\begin{definition}
\label{11}\textit{Let }$\left\{ \mathcal{F}\phi _{j}\right\} _{j\in \mathbb{N%
}_{0}}$\textit{\ be a resolution of unity}, $s:\mathbb{R}^{n}\rightarrow 
\mathbb{R}$ and $p,q\in \mathcal{P}(\mathbb{R}^{n})$.\textit{\ The Besov
space }$B_{p(\cdot ),q(\cdot )}^{s(\cdot )}$\textit{\ consists of all
distributions }$f\in \mathcal{S}^{\prime }(%
\mathbb{R}
^{n})$\textit{\ such that}%
\begin{equation*}
\left\Vert f\right\Vert _{B_{p(\cdot ),q(\cdot )}^{s(\cdot )}}:=\left\Vert
(2^{j s(\cdot )}\phi _{j}\ast f)_{j\in \mathbb{N}_0}\right\Vert _{\ell ^{q(\cdot
)}(L^{p\left( \cdot \right) })}<\infty .
\end{equation*}
\end{definition}

If we take $s\in \mathbb{R}$ and $q\in [1,\infty ]$ as constants,  the
spaces $B_{p(\cdot ),q}^{s}$ where studied  by Xu in \cite{X1}. We refer the
reader to the recent papers  \cite{AlCa152}, \cite{D3} and 
\cite{KV122} for further details, historical remarks and more references on
these function spaces. For any $p,q\in \mathcal{P}^{\log }(\mathbb{R}%
^{n})$ and $s\in C_{\text{loc}}^{\log }$, the space $B_{p(\cdot ),q(\cdot
)}^{s(\cdot )}$ does not depend on the chosen smooth dyadic resolution of 
unity $\{\mathcal{F}\phi _{j}\}_{j\in \mathbb{N}_{0}}$ (in the
sense of\ equivalent quasi-norms). Moreover, if $p,q,s$ are constants, we re-obtain the usual Besov spaces $%
B_{p,q}^{s}$, studied in detail in \cite{T1,T2,T4}.

Now we recall  the following  lemmas. We begin by \cite[Lemma 6.1]{DHR}, see also \cite[Lemma 19]{KV122} 
\begin{lemma}
\label{DHR-lemma}Let $\alpha \in C_{\mathrm{loc}}^{\log }(\mathbb{R}^{n})$ and let $R\geq
c_{\log }(\alpha )$, where $c_{\log }(\alpha )$ is the constant from $%
\mathrm{\eqref{lo-log-Holder}}$ for $\alpha $. Then%
\begin{equation*}
2^{j\alpha (x)}\eta _{j,m+R}(x-y)\leq c\text{ }2^{j\alpha (y)}\eta
_{j,m}(x-y)
\end{equation*}%
with $c>0$ independent of $x,y\in \mathbb{R}^{n}$ and $j,m\in \mathbb{N}%
_{0}. $
\end{lemma}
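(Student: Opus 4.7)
The inequality to be shown is equivalent, after cancelling the common factor $2^{jn}$, to
\[ 2^{j(\alpha(x)-\alpha(y))}\le c\,(1+2^{j}|x-y|)^{R} \]
uniformly in $x,y\in\mathbb{R}^n$ and $j,m\in\mathbb{N}_0$. I would first dispose of the degenerate cases: if $x=y$ both sides equal $1$, and if $\alpha(x)\le\alpha(y)$ the left-hand side is at most $1$ while the right-hand side is at least $1$. Thus I may assume $|x-y|>0$ and $\delta:=\alpha(x)-\alpha(y)\ge 0$.

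The plan is to exploit the decomposition
\[ 2^{j}=\frac{2^{j}}{e+1/|x-y|}\cdot\bigl(e+1/|x-y|\bigr), \]
raise to the power $\delta$, and estimate the two resulting factors separately. The local log-Hölder condition (\ref{lo-log-Holder}) applied to $\alpha$ gives $\delta\log(e+1/|x-y|)\le c_{\log}(\alpha)$, which exponentiates to the clean bound
\[ \bigl(e+1/|x-y|\bigr)^{\delta}\le e^{c_{\log}(\alpha)}. \]
For the other factor I would simplify
\[ \frac{2^{j}}{e+1/|x-y|}=\frac{2^{j}|x-y|}{1+e|x-y|}\le 2^{j}|x-y|\le 1+2^{j}|x-y|, \]
so that $\bigl(2^{j}/(e+1/|x-y|)\bigr)^{\delta}\le (1+2^{j}|x-y|)^{\delta}$ since $\delta\ge 0$.

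To upgrade the exponent from $\delta$ to $R$, I would use $\log(e+1/|x-y|)\ge 1$ to obtain $\delta\le c_{\log}(\alpha)\le R$, combined with $1+2^{j}|x-y|\ge 1$. Multiplying the two factor estimates then yields
\[ 2^{j\delta}\le e^{c_{\log}(\alpha)}\,(1+2^{j}|x-y|)^{R}, \]
which is the desired inequality with constant $c=e^{c_{\log}(\alpha)}$, independent of $x,y,j,m$. I do not expect any serious obstacle here: the substantive step is spotting the factorization that converts the exponential inequality into a form where log-Hölder applies directly, after which everything reduces to elementary manipulation and one application of the assumption $R\ge c_{\log}(\alpha)$.
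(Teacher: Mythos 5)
The paper does not prove Lemma \ref{DHR-lemma} itself: it is quoted directly from the literature, with citations to \cite[Lemma 6.1]{DHR} and \cite[Lemma 19]{KV122}, so there is no in-paper proof to compare against. Your argument is correct and self-contained: the reduction to $2^{j(\alpha(x)-\alpha(y))}\le c(1+2^j|x-y|)^R$ is exact, the factorization $2^j=\frac{2^j}{e+1/|x-y|}\cdot(e+1/|x-y|)$ cleanly isolates the part the log-H\"older condition controls (giving $(e+1/|x-y|)^\delta\le e^{c_{\log}(\alpha)}$) from the part that becomes the polynomial weight (giving $\bigl(2^j/(e+1/|x-y|)\bigr)^\delta\le(1+2^j|x-y|)^\delta$), and the upgrade from exponent $\delta$ to $R$ via $\delta\le c_{\log}(\alpha)\le R$ together with $1+2^j|x-y|\ge1$ is valid. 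This is essentially the standard proof found in the cited references, phrased so as to avoid the usual case split on $2^j|x-y|\lessgtr1$; no gap.
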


The previous lemma allows us to treat the variable smoothness in many cases
as if it were not variable at all, namely we can move the term inside the
convolution as follows:%
\begin{equation*}
2^{j\alpha (x)}\eta _{j,m+R}\ast f(x)\leq c\text{ }\eta _{j,m}\ast
(2^{j\alpha (\cdot )}f)(x).
\end{equation*}

Since the maximal operator is in general not bounded on $\ell ^{q(\cdot )}(L^{p(\cdot )})$, see \cite[Section 4]{AH},
the following statement is of great interest in this paper, see  \cite[Lemma 4.7]{AH}.
\begin{lemma}
\label{Alm-Hastolemma1} Let $p\in \mathcal{P}^{\log }(\mathbb{R}^{n})$ and $%
q\in \mathcal{P}\left( \mathbb{R}^{n}\right) $\ with\ $\frac{1}{q}\in C_{%
\mathrm{loc}}^{\log }\left( \mathbb{R}^{n}\right) $. For $m>n+c_{\log }(1/q)$%
, there exists $c>0$ such that%
\begin{equation*}
\left\Vert \left( \eta _{j,m}\ast f_{j}\right) _{j\in \mathbb{N}_{0}}\right\Vert _{\ell
^{q(\cdot )}(L^{p(\cdot )})}\leq c\left\Vert \left( f_{j}\right)
_{j\in \mathbb{N}_{0}}\right\Vert _{\ell ^{q(\cdot )}(L^{p(\cdot )})}.
\end{equation*}
\end{lemma}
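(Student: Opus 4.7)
The plan is to work with the modular rather than the norm. Since $\|\cdot\|_{\ell^{q(\cdot)}(L^{p(\cdot)})}\le 1$ is equivalent to the modular being $\le 1$, it suffices by homogeneity to produce a constant $C$ with $\varrho_{\ell^{q(\cdot)}(L^{p(\cdot)})}((\eta_{j,m}\ast f_j)_j) \le C$ whenever $\varrho_{\ell^{q(\cdot)}(L^{p(\cdot)})}((f_j)_j) \le 1$. For each $j$ I would select $\lambda_j>0$ nearly attaining the defining infimum for $f_j$, so that $\varrho_{p(\cdot)}(f_j/\lambda_j^{1/q(\cdot)})\le 1$ and $\sum_j \lambda_j \le 2$. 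The goal then reduces to finding a universal constant $c$ with
\[
\varrho_{p(\cdot)}\bigl((\eta_{j,m}\ast f_j)/(c\lambda_j)^{1/q(\cdot)}\bigr)\le 1
\]
for each $j$, since then $(c\lambda_j)_j$ is admissible for the modular of $(\eta_{j,m}\ast f_j)_j$ and summation gives $\varrho_{\ell^{q(\cdot)}(L^{p(\cdot)})}((\eta_{j,m}\ast f_j)_j) \le 2c$.

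To obtain this $L^{p(\cdot)}$-modular estimate I would write
\[
\frac{(\eta_{j,m}\ast f_j)(x)}{\lambda_j^{1/q(x)}} = \int_{\mathbb{R}^n} \eta_{j,m}(x-y)\,\frac{f_j(y)}{\lambda_j^{1/q(y)}}\,\lambda_j^{1/q(y)-1/q(x)}\,dy,
\]
and try to absorb the oscillating weight $\lambda_j^{1/q(y)-1/q(x)}$ into the kernel. The log-Hölder condition on $1/q$ gives $|1/q(x)-1/q(y)|\le c_{\log}(1/q)/\log(e+1/|x-y|)$, which is precisely the ingredient needed to trade the weight for a mild loss of decay in $\eta_{j,m}$, in the same spirit as Lemma \ref{DHR-lemma}. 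Once that trade is executed, one arrives at a pointwise bound
\[
\frac{|(\eta_{j,m}\ast f_j)(x)|}{\lambda_j^{1/q(x)}} \le c\,\eta_{j,\,m-c_{\log}(1/q)} \ast \bigl(|f_j|/\lambda_j^{1/q(\cdot)}\bigr)(x),
\]
and, because $m-c_{\log}(1/q) > n$, the kernel $\eta_{j,\,m-c_{\log}(1/q)}$ is in $L^1$ with unit-scale mass; applying \eqref{eneq p norm} to the right-hand side then delivers the required $L^{p(\cdot)}$-norm bound, hence the modular inequality.

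The main obstacle is exactly this absorption step: the numbers $\lambda_j$ are completely free in $(0,\infty)$ with no a priori relation to the scale $2^{-j}$, so Lemma \ref{DHR-lemma}, which was designed for weights of the special form $2^{j\alpha(x)}$, cannot be invoked directly. The remedy is a case split: on the near-diagonal region $|x-y|\le 1$ the log-Hölder oscillation is small enough that $\lambda_j^{1/q(y)-1/q(x)}$ can be dominated by a polynomial in $1+2^j|x-y|$, and on the far region $|x-y|>1$ the surplus decay of $\eta_{j,m}$ provided by $m>n+c_{\log}(1/q)$ overcomes the growth of the weight. A secondary complication arises when $p$ or $q$ attains $\infty$ on a set of positive measure, which forces one to return to the conventions on $\omega_p$ and \eqref{ess sup mod} and re-verify the endpoint computations; however, these verifications follow the same pattern and do not alter the overall scheme.
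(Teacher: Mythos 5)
The paper does not actually prove this lemma; it is quoted verbatim from \cite[Lemma 4.7]{AH}, so there is no internal proof to compare against, and your attempt must stand on its own. Your outline (pass to modulars, pick near-optimal $\lambda_j$, trade the weight $\lambda_j^{1/q(y)-1/q(x)}$ for a loss of decay in the kernel, conclude with \eqref{eneq p norm}) is the right skeleton, and you correctly identify the obstacle: the $\lambda_j$ bear no a priori relation to the scale $2^{-j}$, so Lemma~\ref{DHR-lemma} is not directly available. But the remedy you propose — a case split on $|x-y|\le 1$ versus $|x-y|>1$ — does not close this gap.

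There is no lower bound on $\lambda_j$: a sequence such as $\lambda_j=2^{-2^{j}}$ is admissible under $\sum_j\lambda_j\le 2$. Take $|x-y|$ of order $1$ (your near-diagonal region) with $1/q(x)-1/q(y)=\varepsilon>0$, which log-H\"older continuity certainly permits at unit separation. Then $\lambda_j^{1/q(y)-1/q(x)}=\lambda_j^{-\varepsilon}=2^{\varepsilon 2^{j}}$, which is doubly exponential in $j$, whereas $(1+2^{j}|x-y|)^{C}$ is only $\approx 2^{jC}$. So your claimed intermediate pointwise bound $|\eta_{j,m}\ast f_j|/\lambda_j^{1/q(\cdot)}\lesssim \eta_{j,m-c_{\log}(1/q)}\ast\bigl(|f_j|/\lambda_j^{1/q(\cdot)}\bigr)$ fails, and neither half of your case split rescues it: on the far region the polynomial surplus decay $m-n-c_{\log}(1/q)$ is equally helpless against a factor that is super-exponential in $j$.

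The missing idea — which is the crux of the argument in \cite{AH} — is to first replace $\lambda_j$ by $\tilde\lambda_j:=\max\{\lambda_j,2^{-j}\}$. Since $\tilde\lambda_j\ge\lambda_j$ the modular only decreases, $\varrho_{p(\cdot)}\bigl(f_j/\tilde\lambda_j^{1/q(\cdot)}\bigr)\le\varrho_{p(\cdot)}\bigl(f_j/\lambda_j^{1/q(\cdot)}\bigr)\le 1$, while $\sum_j\tilde\lambda_j\le\sum_j\lambda_j+\sum_j 2^{-j}\le 4$, so this costs only a universal constant. After this normalization $2^{-j}\le\tilde\lambda_j\le 4$, hence $|\log_2\tilde\lambda_j|\lesssim j$, and the weight obeys $\tilde\lambda_j^{1/q(y)-1/q(x)}\lesssim 2^{j|1/q(x)-1/q(y)|}\lesssim(1+2^{j}|x-y|)^{c_{\log}(1/q)}$ by exactly the computation behind Lemma~\ref{DHR-lemma} with $\alpha=1/q$. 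At that point your absorption step, the application of \eqref{eneq p norm}, and the endpoint bookkeeping for $p$ or $q$ equal to $\infty$ all go through as you wrote them.
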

\section{\textbf{The Norm Duality Of  $\ell ^{q(\cdot)}(L^{p(\cdot )})$}}
\label{secc1}

\begin{lemma} \label{Thm inc}
Let $p,q\in \mathcal{P}(\mathbb{R}^{n})$ and $\{ (f_{j}^n)_{j\in \mathbb{N}_{0}}\}_{n\in \mathbb{N}_0}$ be a sequence of elements of $\ell ^{q(\cdot
)}(L^{p(\cdot )})$, suppose that  $|f_{j}^n|\leq |f_{j}^{n+1}|$ and $\lim_n f_j^n(x) =f_j(x)$ for all $j, n \in \mathbb{N}_0 \text{ and } x \in \mathbb{R}^n$, then 
\begin{equation}\label{equa thm1}
\sup_n \big\|(f_{j}^n)_{j\in \mathbb{N}_{0}}\big\|_{\ell ^{q(\cdot
)}(L^{p(\cdot )})}=\big\|(f_{j})_{j\in \mathbb{N}_{0}}\big\|_{\ell ^{q(\cdot
)}(L^{p(\cdot )})}.
\end{equation}
\end{lemma}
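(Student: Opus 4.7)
The plan is to first establish the monotone convergence statement at the level of the modular $\varrho_{\ell^{q(\cdot)}(L^{p(\cdot)})}$, and then transfer it to the Luxemburg norm. The inequality $\sup_n \|(f_j^n)_{j\in\mathbb{N}_0}\|_{\ell^{q(\cdot)}(L^{p(\cdot)})} \leq \|(f_j)_{j\in\mathbb{N}_0}\|_{\ell^{q(\cdot)}(L^{p(\cdot)})}$ is immediate: since $|f_j^n| \leq |f_j|$ pointwise for every $j,n$ and $\omega_{p(x)}$ is non-decreasing in its argument, the set $\{\lambda > 0 : \varrho_{p(\cdot)}(f_j/\lambda^{1/q(\cdot)}) \leq 1\}$ is contained in the corresponding set for $f_j^n$, so each term in the defining series of the modular is monotone in $|\cdot|$, and hence so is the norm.

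For the converse, set
\begin{equation*}
a_j^n := \inf\{\lambda > 0 : \varrho_{p(\cdot)}(f_j^n/\lambda^{1/q(\cdot)}) \leq 1\}, \qquad a_j := \inf\{\lambda > 0 : \varrho_{p(\cdot)}(f_j/\lambda^{1/q(\cdot)}) \leq 1\},
\end{equation*}
so that the two modulars equal $\sum_j a_j^n$ and $\sum_j a_j$, respectively. The key step is to show $a_j^n \nearrow a_j$ for each fixed $j$. The map $\lambda \mapsto \varrho_{p(\cdot)}(g/\lambda^{1/q(\cdot)})$ is non-increasing for any measurable $g \geq 0$, using monotonicity of $\omega_{p(x)}$ together with the conventions $\lambda^{1/\infty}=1$, $1^\infty=0$ and $t^\infty=\infty\chi_{]1;\infty]}(t)$. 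Hence, for any $\lambda > \sup_n a_j^n$, the definition of the infimum gives $\varrho_{p(\cdot)}(f_j^n/\lambda^{1/q(\cdot)}) \leq 1$ for every $n$. Because $|f_j^n/\lambda^{1/q(\cdot)}| \nearrow |f_j/\lambda^{1/q(\cdot)}|$, equation \eqref{pcv-eq} yields $\varrho_{p(\cdot)}(f_j/\lambda^{1/q(\cdot)}) \leq 1$, so $a_j \leq \lambda$; letting $\lambda \searrow \sup_n a_j^n$ proves the claim.

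Summing over $j$ (monotone convergence for non-negative series) then gives the modular version of \eqref{equa thm1}, namely $\varrho_{\ell^{q(\cdot)}(L^{p(\cdot)})}((f_j^n)_j) \nearrow \varrho_{\ell^{q(\cdot)}(L^{p(\cdot)})}((f_j)_j)$. The passage to the Luxemburg norm is formally the same argument one level up: if $\sup_n \|(f_j^n)_j\| < \|(f_j)_j\|$, pick $\mu$ strictly between them; since $\|(f_j^n)_j\| < \mu$ and $\mu \mapsto \varrho_{\ell^{q(\cdot)}(L^{p(\cdot)})}((f_j^n/\mu)_j)$ is non-increasing, the definition of the Luxemburg norm forces $\varrho_{\ell^{q(\cdot)}(L^{p(\cdot)})}((f_j^n/\mu)_j) \leq 1$ for every $n$; the modular convergence just established then gives $\varrho_{\ell^{q(\cdot)}(L^{p(\cdot)})}((f_j/\mu)_j) \leq 1$, whence $\|(f_j)_j\| \leq \mu$, a contradiction.

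The main subtlety I expect is bookkeeping the endpoint cases where $p(x)$ or $q(x)$ equals $\infty$: the two monotonicity claims (in the integrand $|g|$ and in the scaling parameter $\lambda$) have to be checked directly from the paper's left-continuous extensions of $\omega_p$ and the special conventions for $t^\infty$ and $\lambda^{1/\infty}$, because the continuity statement in Lemma \ref{cont-lem} requires $p^+,q^+<\infty$ and is therefore unavailable here. Once the term-by-term convergence $a_j^n \nearrow a_j$ has been established, the remainder reduces to elementary manipulations of infima and non-negative series.
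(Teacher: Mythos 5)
Your proof is correct and follows essentially the same route as the paper's: both hinge on establishing component-wise monotone convergence of the infima $\beta_j^n$ (your $a_j^n$) using \eqref{pcv-eq} together with the monotonicity of $\lambda\mapsto\varrho_{p(\cdot)}(g/\lambda^{1/q(\cdot)})$, and both must work around the non-attainment of the infimum — you do so by taking $\lambda\searrow\sup_n a_j^n$, while the paper uses the perturbation $\gamma_j=\beta_j+\varepsilon/2^{j-1}$ inside a $K$-scaled version of the modular. Your organization (modular-level monotone convergence first, then a scaling/contradiction transfer to the Luxemburg norm) is a modest repackaging of the paper's computation, and your remark that Lemma~\ref{cont-lem}'s continuity is unavailable without $p^+,q^+<\infty$ — so one must argue from monotonicity alone — is exactly right.
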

\begin{proof}
It's clear that the left hand side of (\ref{equa thm1}) is increasing and less than the right hand side, thus
\begin{align*}
\sup_n \big\|(f_{j}^n)_{j\in \mathbb{N}_{0}}\big\|_{\ell ^{q(\cdot
)}(L^{p(\cdot )})}\leq\big\|(f_{j})_{j\in \mathbb{N}_{0}}\big\|_{\ell ^{q(\cdot
)}(L^{p(\cdot )})}.
\end{align*}
Now, we prove the reverse inequality, i.e.,
\begin{align*}
\sup_n \big\|(f_{j}^n)_{j\in \mathbb{N}_{0}}\big\|_{\ell ^{q(\cdot
)}(L^{p(\cdot )})}\geq\big\|(f_{j})_{j\in \mathbb{N}_{0}}\big\|_{\ell ^{q(\cdot
)}(L^{p(\cdot )})}.
\end{align*}
Denote $\mu_n:=\big\|(f_{j}^n)_{j\in \mathbb{N}_{0}}\big\|_{\ell ^{q(\cdot)}(L^{p(\cdot )})}$, if $\sup_n \mu_n=\infty$ its clear that (\ref{equa thm1}) holds, on the other hand,  let  $K:=\sup_n \mu_n+\delta  \text{ where } \delta>0$ and 
\begin{align*}
\beta_j^n:=\inf \Big\{\lambda _{j}^n>0:\varrho
_{p(\cdot )}\Big(\frac{f_{j}^n}{K\lambda _{j}^{n\, 1/q(\cdot )}}\Big)\leq 1\Big\},
\end{align*}
it follows, for any $n \in \mathbb{N}_{0}$,  $ \sum_{j=0}^{\infty } \beta_j^n\leq 1$ and for all $j,n \in \mathbb{N}_{0}$, $\beta_j^n \leq \beta_j^{n+1}$ . Let $\beta_j:=\lim_n \beta_j^n = sup_n \beta_j^n$, hence, we have $ \sum_{j=0}^{\infty } \beta_j\leq 1$. Let $\gamma_j:=\beta_j+\varepsilon/2^{j-1}$ for every $j \in \mathbb{N}_{0}$ and an $\varepsilon>0$, thus, $ \sum_{j=0}^{\infty } \gamma_j\leq 1+\varepsilon$ and by (\ref{pcv-eq}),
\begin{align*}
\varrho_{p(\cdot )}\Big(\frac{f_{j}}{K\gamma_{j}^{1/q(\cdot )}}\Big)&=\lim_n
\varrho_{p(\cdot )}\Big(\frac{f_{j}^n}{K\gamma_{j}^{1/q(\cdot )}}\Big)\\
&\leq \lim_n \varrho_{p(\cdot )}\Big(\frac{f_{j}^n}{K(\beta_j^n+\varepsilon/2^{j-1})^{1/q(\cdot )}}\Big)\\
&\leq  1,
\end{align*}
it follows that 
\begin{align*}
\sum_{j=0}^{\infty }\inf \Big\{\lambda _{j}>0:\varrho
_{p(\cdot )}\Big(\frac{f_{j}}{K\lambda _{j}^{ 1/q(\cdot )}}\Big)\leq 1\Big\} \leq 
\sum_{j=0}^{\infty } \gamma_j\leq 1+\varepsilon,
\end{align*}
since $\varepsilon$ is arbitrary we conclude that $\varrho _{\ell ^{q(\cdot
)}(L^{p(\cdot )})}((f_{j})_{j\in \mathbb{N}_{0}}/K)\leq 1$. Therefore, $(f_{j})_{j\in \mathbb{N}_{0}} \in \ell ^{q(\cdot
)}(L^{p(\cdot )})$  and
\begin{align*}
\big\|(f_{j})_{j\in \mathbb{N}_{0}}\big\|_{\ell ^{q(\cdot
)}(L^{p(\cdot )})}\leq \sup_n \big\|(f_{j}^n)_{j\in \mathbb{N}_{0}}\big\|_{\ell ^{q(\cdot)}(L^{p(\cdot )})} +\delta,
\end{align*}
letting $\delta$ go to zero we get the reverse inequality, which completes the proof.
\end{proof}

The next Lemma which is some times called \textit{the norm conjugate formula} is \cite[Corollary 3.2.14]{DHHR},
see also \cite{CF}.
\begin{lemma}\label{Norm cong Form} Let $p \in  \mathcal{P}(\mathbb{R}^{n})$. Then 
$$\big\|f\big\|_{p(\cdot )} \approx \sup_{\|g\|_{p'(\cdot )}\leq 1} \int |f| |g| .$$
\end{lemma}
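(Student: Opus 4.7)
I would prove the two inequalities of the equivalence separately. The direction $\sup_{\|g\|_{p'(\cdot)}\le 1}\int |f||g|\lesssim \|f\|_{p(\cdot)}$ is the short one: the variable-exponent H\"older inequality yields $\int|fg|\le C\|f\|_{p(\cdot)}\|g\|_{p'(\cdot)}$ for an absolute constant $C$ (the factor arising from the convention $1/\infty=0$ in the conjugate relation), and taking the supremum over the unit ball of $L^{p'(\cdot)}$ finishes this half. Crucially, no regularity on $p$ is used here.

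For the reverse direction $\|f\|_{p(\cdot)}\lesssim \sup_{\|g\|_{p'(\cdot)}\le 1}\int |f||g|$, the idea is to construct a near-extremizing test function by hand. Set $\lambda:=\|f\|_{p(\cdot)}>0$ and split $\mathbb{R}^n$ into $E_1:=\{p=1\}$, $E_0:=\{1<p<\infty\}$, and $E_\infty:=\{p=\infty\}$. On $E_0$ I would define
$$g(x):=\mathrm{sgn}(f(x))\bigl(|f(x)|/\lambda\bigr)^{p(x)-1},$$
which is tailored so that $|g|^{p'(x)}=\omega_{p(x)}(|f|/\lambda)$ pointwise and $|f||g|=\lambda\,\omega_{p(x)}(|f|/\lambda)$. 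On $E_1$ take $g:=\mathrm{sgn}(f)$, which has $\|g\|_{L^\infty(E_1)}\le 1$. On $E_\infty$, where $p'=1$, take $g$ to be a properly normalized $L^1$-witness for the essential supremum of $|f|$ over that set. Assembling these pieces one checks $\varrho_{p'(\cdot)}(g)\lesssim 1$, hence $\|g\|_{p'(\cdot)}\lesssim 1$, while
$$\int |f||g|\;\gtrsim\;\lambda\,\varrho_{p(\cdot)}(f/\lambda)+(\text{essential-supremum contribution on }E_\infty).$$

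The main obstacle is closing the gap between the unit-ball property $\varrho_{p(\cdot)}(f/\lambda)\le 1$ and the desired equality $\varrho_{p(\cdot)}(f/\lambda)=1$, which would immediately yield $\int|f||g|\gtrsim \lambda$. By Lemma~\ref{cont-lem}(i) the equality holds as soon as $p^+<\infty$ thanks to continuity of the modular in the scaling parameter, but when $|E_\infty|>0$ only the inequality survives. In that case I would argue by approximation from below: for every $\mu<\lambda$ the infimum defining the Luxemburg norm fails at $\mu$, so some portion of $|f|$ must carry essential-supremum mass comparable to $\mu$ on $E_\infty$; the construction above, applied with $\mu$ in place of $\lambda$, then produces a $g_\mu$ with $\|g_\mu\|_{p'(\cdot)}\lesssim 1$ and $\int |f|g_\mu\gtrsim \mu$. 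Letting $\mu\nearrow\lambda$ concludes the proof. Tracking the universal constants through this limiting step, especially their independence of $p$, is the part I expect to require the most care.
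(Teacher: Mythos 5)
The paper does not give its own proof here; it cites the result as \cite[Corollary~3.2.14]{DHHR} (the ``norm conjugate formula''). So I will assess your argument on its own terms. The H\"older direction is correct and needs no regularity on $p$, as you note. The reverse direction also begins on the right foot: Young's equality singles out $g=\operatorname{sgn}(f)\,(|f|/\lambda)^{p(\cdot)-1}$ on $\{1<p<\infty\}$, with the sign witness on $\{p=1\}$ and a normalized $L^1$ witness on $\{p=\infty\}$, and your modular bookkeeping for $\|g\|_{p'(\cdot)}$ is sound.

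The gap is in the limiting step, and it is not small. First, the obstruction is governed by $p^+=\infty$, not by $|E_\infty|>0$: take $p\equiv n$ on intervals $I_n$ ($n\ge2$) with $|I_n|=\varepsilon/(n(n+1))$ and $f\equiv1$ on $\bigcup_nI_n$. Then $p(x)<\infty$ everywhere, yet $\|f\|_{p(\cdot)}=1$ while $\varrho_{p(\cdot)}(f)=\varepsilon/2$, which can be made arbitrarily small; your $g$ then yields $\int|f||g|=\varepsilon/2\not\gtrsim\lambda$. (The correct near-extremizer here is a large constant times $\chi_{I_n}$ for $n$ large, which your construction does not produce.) Second, and more seriously, the ``approximation from below'' you propose collapses in exactly these cases: for every $\mu<\lambda$ one has $\varrho_{p(\cdot)}(f/\mu)=\infty$, hence $\varrho_{p'(\cdot)}(g_\mu)=\varrho_{p(\cdot)}(f/\mu)=\infty$ and $g_\mu\notin L^{p'(\cdot)}$, so there is nothing to normalize. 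Your intermediate claim that $\varrho_{p(\cdot)}(f/\mu)>1$ forces ``essential-supremum mass comparable to $\mu$ on $E_\infty$'' is also false — the blow-up may live entirely in $E_0$, as in the example. The standard repair is to truncate $f$ rather than the scaling parameter: put $f_m:=|f|\,\chi_{\{|f|\le m\}\cap\{p\le m\}\cap B(0,m)}$, so that $\varrho_{p(\cdot)}(f_m/\mu)<\infty$ for every $\mu>0$ and the map $\mu\mapsto\varrho_{p(\cdot)}(f_m/\mu)$ is continuous (the relevant exponent now satisfies $p^+\le m$). Then $\varrho_{p(\cdot)}(f_m/\|f_m\|_{p(\cdot)})=1$, your construction gives $\|f_m\|_{p(\cdot)}\lesssim\sup_{\|g\|_{p'(\cdot)}\le1}\int|f_m||g|\le\sup_{\|g\|_{p'(\cdot)}\le1}\int|f||g|$, and finally $\|f_m\|_{p(\cdot)}\nearrow\|f\|_{p(\cdot)}$ by monotone convergence of the modular, i.e.\ the scalar analogue of Lemma~\ref{Thm inc}. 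With that truncation inserted, your approach does go through.
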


The following Lemma is the main result of this section.
\begin{lemma}
\label{duality2} Let $p,q\in \mathcal{P}(\mathbb{R}^{n})$ and $\left( f_{j}\right) _{j\in \mathbb{N
}_{0}}\in\ell ^{q(\cdot)}(L^{p(\cdot )})$. Then
\begin{equation*}
\big\|(f_{j})_{j\in \mathbb{N}_{0}}\big\|_{\ell ^{q(\cdot
)}(L^{p(\cdot )})}\approx \sup_{( g_{j}) _{j\in \mathbb{N
}_{0}} \in U_{p',q'}} \int_{\mathbb{R}^{n}}\sum_{j=0}^{\infty }|f_{j}(x)||g_{j}(x)|dx,
\end{equation*}%
where $U_{p',q'}$ is the unit ball centered at zero in $\ell ^{q'(\cdot
)}(L^{p'(\cdot )})$ 
of all functions $\left(
g_{j}\right) _{j\in \mathbb{N}_{0}}\in \ell ^{q'(\cdot
)}(L^{p'(\cdot )})$ such that 
\begin{equation*}
\big\|(g_{j})_{j\in \mathbb{N}_{0}}\big\|_{\ell ^{q'(\cdot
)}(L^{p'(\cdot )})}\leq 1.
\end{equation*}
\end{lemma}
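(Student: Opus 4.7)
I would prove the two directions of the equivalence separately, in each case normalizing by homogeneity so that $\|(f_j)\|_{\ell^{q(\cdot)}(L^{p(\cdot)})}=1$ (and, for the easy direction, so that $(g_j)\in U_{p',q'}$). The two key tools already in place are the variable-exponent H\"older inequality on $L^{p(\cdot)}$ and Lemma~\ref{Norm cong Form}, together with the modular continuity from Lemma~\ref{cont-lem}.

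For the upper bound (sup $\lesssim$ norm), I fix $\varepsilon>0$ and pick near-optimal $\lambda_j,\mu_j>0$ in the defining infima, so that $\sum_j\lambda_j\leq 1+\varepsilon$, $\sum_j\mu_j\leq 1+\varepsilon$, $\|f_j/\lambda_j^{1/q(\cdot)}\|_{p(\cdot)}\leq 1$, and $\|g_j/\mu_j^{1/q'(\cdot)}\|_{p'(\cdot)}\leq 1$. Writing
\[
|f_j(x)g_j(x)| \;=\; \frac{|f_j(x)|}{\lambda_j^{1/q(x)}}\cdot\frac{|g_j(x)|}{\mu_j^{1/q'(x)}}\cdot\lambda_j^{1/q(x)}\mu_j^{1/q'(x)},
\]
I bound the last factor by $\lambda_j+\mu_j$ via pointwise Young (checked under the conventions $t^{1/\infty}=1$ and $t^{\infty}=\infty\chi_{]1,\infty]}(t)$ to cover $q(x)\in\{1,\infty\}$); this is a constant in $x$, so I factor it out of the integral, apply the variable-exponent H\"older inequality to the two remaining factors, and obtain $\int|f_jg_j|\,dx\leq 2(\lambda_j+\mu_j)$. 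Summing in $j$ and sending $\varepsilon\to 0$ then yields $\int\sum_j|f_jg_j|\,dx\leq 4$.

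For the lower bound (norm $\lesssim$ sup), modular continuity (Lemma~\ref{cont-lem}) turns the norm-$1$ hypothesis into $\varrho_{\ell^{q(\cdot)}(L^{p(\cdot)})}((f_j))=1$, and the inner infima are attained by $\alpha_j^*\geq 0$ with $\sum_j\alpha_j^*=1$ and $\|f_j/(\alpha_j^*)^{1/q(\cdot)}\|_{p(\cdot)}=1$ on the indices with $\alpha_j^*>0$. For each such $j$, Lemma~\ref{Norm cong Form} supplies $\tilde g_j$ with $\|\tilde g_j\|_{p'(\cdot)}\leq 1$ and $\int(|f_j|/(\alpha_j^*)^{1/q(\cdot)})|\tilde g_j|\,dx\geq c>0$. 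Setting $G_j:=(\alpha_j^*)^{1/q'(\cdot)}\,\tilde g_j$ and exploiting the identity $(\alpha_j^*)^{1/q'(x)}(\alpha_j^*)^{-1/q(x)}=\alpha_j^*$, I obtain $\int|f_jG_j|\,dx\geq c\,\alpha_j^*$, so $\sum_j\int|f_jG_j|\,dx\geq c$. On the other hand, taking $\beta_j:=\alpha_j^*$ in the inner infimum for the modular of $(G_j)$ gives $G_j/\beta_j^{1/q'(\cdot)}=\tilde g_j$; hence $\varrho_{\ell^{q'(\cdot)}(L^{p'(\cdot)})}((G_j))\leq\sum_j\alpha_j^*=1$ and $(G_j)\in U_{p',q'}$.

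The hard part will be the cases $p^+=\infty$ or $q^+=\infty$, for which Lemma~\ref{cont-lem} does not guarantee continuity of the modular in the scaling parameter: the inner infima need not be attained, so $\sum_j\alpha_j^*=1$ may fail, and on the sets $\{q=1\}$ and $\{q=\infty\}$ the cancellation $(\alpha_j^*)^{1/q'(x)}(\alpha_j^*)^{-1/q(x)}=\alpha_j^*$ degenerates. To handle this I plan a double truncation: replace $f_j$ by the monotone sequence $f_j^N:=f_j\chi_{B(0,N)}\chi_{\{|f_j|\leq N\}}$, which lies in $L^{p(\cdot)}$ with finite modular and, by Lemma~\ref{Thm inc}, satisfies $\|(f_j^N)\|\nearrow\|(f_j)\|$. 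On each truncation the modular is continuous and the construction above produces extremizers $(G_j^N)\in U_{p',q'}$. A parallel cap of the exponents on the level sets $\{p=\infty\}$ and $\{q=\infty\}$, followed by passing to the limit via monotone convergence in $\int\sum_j|f_j^NG_j^N|\,dx$, is expected to deliver the claim in full generality.
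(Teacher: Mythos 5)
Your Step 1 (the upper bound) is correct and essentially equivalent to the paper's: where the paper sets $K_j=\max\{\lambda_j,\beta_j\}$ and pulls the constant $K_j$ through H\"older, you bound $\lambda_j^{1/q(x)}\mu_j^{1/q'(x)}\leq\lambda_j+\mu_j$ by Young and then apply H\"older; both give $\int|f_jg_j|\lesssim\lambda_j+\mu_j$ and sum. Your Step 2 under $p^+,q^+<\infty$ also works and is in fact slightly cleaner than the paper's: instead of the explicit extremizer $h_j=\beta_j^{1/q'(\cdot)}\bigl(f_j/K\beta_j^{1/q(\cdot)}\bigr)^{p-1}$, you invoke Lemma~\ref{Norm cong Form} as a black box to produce $\tilde g_j$, and the verification $(G_j)\in U_{p',q'}$ via $\beta_j:=\alpha_j^*$ is correct. (Minor: the displayed identity should read $(\alpha_j^*)^{1/q'(x)}(\alpha_j^*)^{1/q(x)}=\alpha_j^*$, with a $+$; your subsequent manipulation $\int|f_jG_j|=\alpha_j^*\int|f_j|\alpha_j^{*-1/q}|\tilde g_j|\geq c\,\alpha_j^*$ uses $1/q+1/q'=1$ correctly, so this is only a typo, and the conventions $t^{1/\infty}=1$, $t^0=1$ make it hold at $q(x)\in\{1,\infty\}$ as well.)

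The genuine gap is in your last paragraph, which you yourself flag with ``is expected to deliver.'' The truncation you propose, $f_j^N:=f_j\chi_{B(0,N)}\chi_{\{|f_j|\leq N\}}$, does not make the modular $\mu\mapsto\varrho_{\ell^{q(\cdot)}(L^{p(\cdot)})}((f_j^N)/\mu)$ continuous, because the exponents are unchanged: Lemma~\ref{cont-lem} requires $p^+,q^+<\infty$, which is a property of $p,q$ and is unaffected by truncating the function values or the ball. Likewise, capping only on the level sets $\{p=\infty\}$ and $\{q=\infty\}$ is insufficient, since $p^+=\infty$ can occur with $p$ finite everywhere. The correct truncation is by the sets $A_n:=\{x:p(x)\leq n,\ q(x)\leq n\}$: put $f_j^n:=\chi_{A_n}f_j$ and define capped exponents $\tilde p_n:=\chi_{A_n}p+n\chi_{\mathbb{R}^n\setminus A_n}$, $\tilde q_n:=\chi_{A_n}q+n\chi_{\mathbb{R}^n\setminus A_n}$, so that $\tilde p_n^+,\tilde q_n^+<\infty$. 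Two further observations are then essential, and neither is in your sketch: (a) since $f_j^n$ is supported on $A_n$, the $\ell^{\tilde q_n(\cdot)}(L^{\tilde p_n(\cdot)})$ and $\ell^{q(\cdot)}(L^{p(\cdot)})$ norms of $(f_j^n)$ coincide; (b) the dual extremizers $(h_j^n)$ built from $(f_j^n)$ under the capped exponents are themselves supported on $A_n$, where $\tilde p_n'=p'$ and $\tilde q_n'=q'$, so they lie in the unit ball $U_{p',q'}$ for the \emph{original} exponents. Only then does $\sup_n\|(f_j^n)\|=\|(f_j)\|$ from Lemma~\ref{Thm inc} transfer through. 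Finally, even after the real-valued case is settled, a separate decomposition is needed when $p$ or $q$ is literally $\infty$ on a set of positive measure (the paper's Step~3 splits $\mathbb{R}^n$ into $A\cap B$, $\mathbb{R}^n\setminus B$, and $B\setminus A$ with $A=\{p<\infty\}$, $B=\{q<\infty\}$ and treats the three pieces by the corresponding sub-cases of Step~2); this case is not addressed at all in your plan.
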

\begin{proof}\quad\\
\textbf{Step 1:} First, we prove 
\begin{equation*}
\sup_{( g_{j}) _{j\in \mathbb{N
}_{0}} \in U_{p',q'}} \int_{\mathbb{R}^{n}}\sum_{j=0}^{\infty }|f_{j}(x)||g_{j}(x)|dx \lesssim \big\|(f_{j})_{j\in \mathbb{N}_{0}}\big\|_{\ell ^{q(\cdot
)}(L^{p(\cdot )})},
\end{equation*}
by scaling arguments, it suffices to prove that 
 \begin{equation}\label{st1-ene}
\sup_{( g_{j}) _{j\in \mathbb{N
}_{0}} \in U_{p',q'}} \int_{\mathbb{R}^{n}}\sum_{j=0}^{\infty }|f_{j}(x)||g_{j}(x)|dx \leq c
\end{equation}
where $\|(f_{j})_{j\in \mathbb{N}_{0}}\|_{\ell ^{q(\cdot
)}(L^{p(\cdot )})}\leq1$. By definition of $\|\cdot\|_{\ell ^{q(\cdot
)}(L^{p(\cdot )})}$, there exist positive constants $\lambda_j,\beta_j,j\in~\mathbb{N}_0$ such that 
$\|\lambda_j^{-1/q(\cdot)}f_{j}\|_{p(\cdot)}\leq 1$, $\|\beta_j^{-1/q'(\cdot)}g_{j}\|_{p'(\cdot)}\leq 1$, $\sum_{j=0}^{\infty }\lambda_j\leq 2$ and $\sum_{j=0}^{\infty }\beta_j\leq 2,$ set $K_j :=\max\{\lambda_j,\beta_j\}$. Since $K_j\geq \lambda_j,K_j\geq \beta_j$ then $\|K_j^{-1/q(\cdot)}f_{j}\|_{p(\cdot)}\leq 1$ and  $\|K_j^{-1/q'(\cdot)}g_{j}\|_{p'(\cdot)}\leq 1$,   by H\"{o}lder's  inequality we have
\begin{equation*}
 \int_{\mathbb{R}^{n}} |f_{j}(x)||g_{j}(x)|dx \leq K_j \int_{\mathbb{R}^{n}} K_j^{-1/q(\cdot)}|f_{j}(x)| K_j^{-1/q'(\cdot)} |g_{j}(x)|dx \leq c K_j,
\end{equation*}
therefore
\begin{equation*}
\sum_{j=0}^{\infty } \int_{\mathbb{R}^{n}}|f_{j}(x)||g_{j}(x)|dx \leq c  \sum_{j=0}^{\infty } K_j\leq c
 \end{equation*}
 where $c$ is independent of $f_j, g_j, j\in \mathbb{N}_{0}$, this proves (\ref{st1-ene})
which finishes the proof of~\textit{Step~1}.\\
Next, we prove the reverse inequality, i.e.,
\begin{equation*}\label{rev-dine}
\big\|(f_{j})_{j\in \mathbb{N}_{0}}\big\|_{\ell ^{q(\cdot
)}(L^{p(\cdot )})}\lesssim \sup_{( g_{j}) _{j\in \mathbb{N
}_{0}} \in U_{p',q'}} \int_{\mathbb{R}^{n}}\sum_{j=0}^{\infty }|f_{j}(x)||g_{j}(x)|dx.
\end{equation*}
\textbf{Step 2:} First, we consider the case where $p^+<\infty$ or $p(\cdot)=\infty$ and $q^+<\infty$. Denote $K :=\|(f_{j})_{j\in \mathbb{N}_{0}} \|_{\ell^{q(\cdot)}(L^{p(\cdot )})}=\|(|f_{j}|)_{j\in \mathbb{N}_{0}} \|_{\ell^{q(\cdot)}(L^{p(\cdot )})} $, we suppose that $f_j,j\in \mathbb{N}_{0}$ are real  positive-valued functions and $K>0$ since when $K=0$ the result is obvious, let  
\begin{align*}
\beta_j:=\inf \{\lambda _{j}>0:\varrho
_{p(\cdot )}(f_{j}/K \lambda _{j}^{1/q(\cdot )})\leq 1\},\qquad j\in \mathbb{N}_{0},
\end{align*}
 if $\beta_j=0$ for some $j\in \mathbb{N}_{0}$ then $f_j=0$, hence we can suppose that  $\beta_j>0, \text{ for every } j\in \mathbb{N}_{0}$, we aim to prove that 
\begin{align}
\sum_{j=0}^{\infty }\beta_j=1 \text{ and } \varrho
_{p(\cdot )}\Big(f_{j}/K \beta_{j}^{1/q(\cdot )}\Big)=1 \text{ for avery } j\in \mathbb{N}_{0}.\label{2eq1}
\end{align}

By the definition of $\|\cdot\|_{\ell ^{q(\cdot)}(L^{p(\cdot )})}$, for every  $\varepsilon,\delta>0 $ there exist $\lambda _{j}>0, j\in \mathbb{N}_{0}$ such that  
\begin{align*}
\sum_{j=0}^{\infty }\lambda _{j}\leq 1+\delta \text{ and  } \varrho
_{p(\cdot )}\Big(\frac{f_{j}}{(K+\varepsilon)\lambda _{j}^{1/q(\cdot )}}\Big)\leq 1,
\end{align*}
for  every  $s_j:=\lambda _{j}[(K+\varepsilon)/K]^{q^+},  j\in \mathbb{N}_{0}$, we have 
\begin{align*}
\varrho_{p(\cdot )}\Big(\frac{f_{j}}{K s_{j}^{1/q(\cdot )}}\Big)\leq 
\varrho
_{p(\cdot )}\Big(\frac{f_{j}}{(K+\varepsilon)\lambda _{j}^{1/q(\cdot )}}\Big)\leq 1
\text{ and }
\sum_{j=0}^{\infty }s_{j}\leq (1+\delta)[(K+\varepsilon)/K]^{q^+}, 
\end{align*}
therefore
\begin{align*}
\sum_{j=0}^{\infty }\inf \Big\{\lambda _{j}>0:\varrho
_{p(\cdot )}\Big(\frac{f_{j}}{K\lambda _{j}^{1/q(\cdot )}}\Big)\leq 1\Big\}\leq \sum_{j=0}^{\infty }s_{j}\leq (1+\delta)[(K+\varepsilon)/K]^{q^+},
\end{align*}
since $\varepsilon,\delta$ are arbitrary we conclude that $\varrho _{\ell ^{q(\cdot )}(L^{p\left( \cdot \right)
})}( (f_{j})_{j\in\mathbb{N}_0}/K)=\sum_{j=0}^{\infty }\beta_j\leq1$. Now, if  
$\varrho _{\ell ^{q(\cdot )}(L^{p\left( \cdot \right)
})}( (f_{j})_{j\in\mathbb{N}_0}/K)<1$ and 
since the function $\mu \to \varrho _{\ell ^{q(\cdot )}(L^{p\left( \cdot \right)
})}((f_{j})_{j\in\mathbb{N}_0}/\mu)$ is continuous on $]0;+\infty[$ by Lemma \ref{cont-lem}, there exists $K' < K$ such that $\varrho _{\ell ^{q(\cdot )}(L^{p\left( \cdot \right)
})}((f_{j})_{j\in\mathbb{N}_0}/K')<1$ which makes a contradiction, this proves that $\sum_{j=0}^{\infty }\beta_j=1$.

 For every $j\in \mathbb{N}_{0}$ and $\lambda _{j}>\beta_j$, we have 
\begin{align*}
\varrho
_{p(\cdot )}\Big(f_{j}/K \beta_{j}^{1/q(\cdot )}\Big)&\leq (\lambda_j/\beta_{j})^{p^+}
\varrho
_{p(\cdot )}\Big(f_{j}/K \lambda_{j}^{1/q(\cdot )}\Big)\leq (\lambda_j/\beta_{j})^{p^+} &,\text{ if } p^+<\infty;\\
\varrho
_{p(\cdot )}\Big(f_{j}/K \beta_{j}^{1/q(\cdot )}\Big)&\leq \lambda_j/\beta_{j}
\varrho
_{p(\cdot )}\Big(f_{j}/K \lambda_{j}^{1/q(\cdot )}\Big)\leq \lambda_j/\beta_{j} &,\text{ if } p(\cdot)=\infty,
\end{align*}
therefore by the definition of $\beta_j$, we have $\varrho
_{p(\cdot )}(f_{j}/K \beta_{j}^{1/q(\cdot )})\leq 1$. Now, if $\varrho
_{p(\cdot )}(f_{j}/K \beta_{j}^{1/q(\cdot )})< 1$ for some $j$,  there exists $0<\beta_j' < \beta_j$ such that $\varrho
_{p(\cdot )}(f_{j}/K \beta_{j}'^{\,1/q(\cdot )})< 1$ which makes a contradiction( $\sum_{j=0}^{\infty }\beta_j\text{ becomes strictly less than } 1$), hence,  $\varrho
_{p(\cdot )}(f_{j}/K \beta_{j}^{1/q(\cdot )})=1$ for every  $j\in \mathbb{N}_{0}$.

If $p^+<\infty$, then  by (\ref{2eq1}) we have,  for every $j\in \mathbb{N}_{0}$,
\begin{align*}
K \beta_{j}=\int_{\mathbb{R}^n} f_j \beta_{j}^{1/q'(\cdot )} \Big(f_{j}/K \beta_{j}^{1/q(\cdot )}\Big)^{p-1} ,
\end{align*}
thus, 
\begin{align}
K=K \sum_{j=0}^{\infty }\beta_j=\sum_{j=0}^{\infty } \, \int_{\mathbb{R}^n} f_j h_j \label{equa existence}
\end{align}
where $h_j:= \beta_{j}^{1/q'(\cdot )} \Big(f_{j}/K \beta_{j}^{1/q(\cdot )}\Big)^{p-1}$, it rests only to prove that $\big\|(h_{j})_{j\in \mathbb{N}_{0}}\big\|_{\ell ^{q'(\cdot)}(L^{p'(\cdot )})}\leq 1$, it suffices to prove that 
$\varrho _{\ell ^{q'(\cdot )}(L^{p'\left( \cdot \right)})}((h_{j})_{j\in\mathbb{N}_0})\leq 1$, this last is correct  since  
\begin{align*}
\varrho _{p'(\cdot )}(h_j/\beta_{j}^{1/q'(\cdot )})&=\int_{\mathbb{R}^{n}}\omega _{p'(x)}(h_j(x)/\beta_{j}^{1/q'(x )})\,dx\\
&\leq \varrho _{p(\cdot )}(f_{j}/K \beta_{j}^{1/q(\cdot )})\\
&\leq 1 
\end{align*}%
and $\sum_{j=0}^{\infty }\beta_j=1$.

If $p(\cdot)=\infty$, let $\varepsilon>0$, for any $j\in \mathbb{N}_{0}$  there exists a bounded set $E_j\subset\mathbb{R}^n$ with $|E_j|>0$ such that $f_{j}(x)/K \beta_{j}^{1/q(x)}+ \varepsilon /K2^{j-1} > \|f_{j}/K \beta_{j}^{1/q(\cdot )}\|_\infty$ for any $x\in E_j$, let $T_j=|E_j|^{-1}\chi_{E_j}$, we have   $T_j \in L^1$ and by (\ref{2eq1}),
\begin{align*}
K\beta_j\leq \int_{\mathbb{R}^n} f_{j} \beta_{j}^{1/q'(\cdot )} T_j + \varepsilon /2^{j-1},
\end{align*}
hence
\begin{align*}
K=K \sum_{j=0}^{\infty }\beta_j\leq\sum_{j=0}^{\infty } \, \int_{\mathbb{R}^n} f_j h_j +\varepsilon,
\end{align*}
where $h_j:= \beta_{j}^{1/q'(\cdot )} T_j$, we can easily see that $\|(h_{j})_{j\in \mathbb{N}_{0}}\|_{\ell ^{q'(\cdot)}(L^{1})}\leq 1$,thus 
\begin{equation*}
\big\|(f_{j})_{j\in \mathbb{N}_{0}}\big\|_{\ell ^{q(\cdot
)}(L^{\infty})}\leq \sup_{( g_{j}) _{j\in \mathbb{N
}_{0}} \in U_{1,q'}} \int_{\mathbb{R}^{n}}\sum_{j=0}^{\infty }f_{j}(x)|g_{j}(x)|dx.
\end{equation*}%

Now, let $p\in \mathcal{P}(\mathbb{R}^{n})$, by Lemma \ref{Norm cong Form} and  (\ref{q=infty norm})  we can see that
\begin{equation*}
\big\|(f_{j})_{j\in \mathbb{N}_{0}}\big\|_{\ell ^{\infty}(L^{p(\cdot)})}\leq \sup_{( g_{j}) _{j\in \mathbb{N
}_{0}} \in U_{p',1}} \int_{\mathbb{R}^{n}}\sum_{j=0}^{\infty }f_{j}(x)|g_{j}(x)|dx,
\end{equation*}%
for every  $(f_{j})_{j\in \mathbb{N}_{0}}\in\ell ^{\infty}(L^{p(\cdot )})$.

Now, let $p,q\in \mathcal{P}(\mathbb{R}^{n})$ with \textit{real values} (i.e., $p(x),q(x)\in [1,\infty)$ for a.e. $x\in \mathbb{R}^n$),  $\left( f_{j}\right) _{j\in \mathbb{N}_{0}}\in\ell ^{q(\cdot)}(L^{p(\cdot )})$, for $(n, j) \in \mathbb{N}\times\mathbb{N}_{0}$ define
$A_n:=\{x\in \mathbb{R}^n | p(x)\leq n, q(x)\leq n \}$ and $f_j^n=\chi_{A_n}f_j$, it's clear that
$f_{j}^n\leq f_{j}^{n+1}$ for every $(n, j) \in \mathbb{N}\times\mathbb{N}_{0}$ and $\lim_n f_j^n(x) =f_j(x)$ for all $j\in \mathbb{N}_0, x \in \mathbb{R}^n$, by Lemma \ref{Thm inc} we have,
\begin{equation}
\big\|(f_{j})_{j\in \mathbb{N}_{0}}\big\|_{\ell ^{q(\cdot
)}(L^{p(\cdot )})}=\sup_n \big\|(f_{j}^n)_{j\in \mathbb{N}_{0}}\big\|_{\ell ^{q(\cdot
)}(L^{p(\cdot )})},
\end{equation}
let $ \tilde{p}_n:=\chi_{A_n} p + n \chi_{\mathbb{R}^n\backslash A_n} $ and $\tilde{q}_n:=\chi_{A_n} q + n \chi_{\mathbb{R}^n\backslash A_n}$, since $\tilde{p}_n^+<\infty$ and $\tilde{q}_n^+<\infty$, by (\ref{equa existence}),   for every $n \in \mathbb{N}$ there exists  $(
h_{j}^n) _{j\in \mathbb{N}_{0}}\in U_{\tilde{p}_n',\tilde{q}_n'}$ such that
\begin{align*}
 \big\|(f_{j}^n)_{j\in \mathbb{N}_{0}}\big\|_{\ell ^{\tilde{q}_n(\cdot
)}(L^{\tilde{p}_n(\cdot )})}=\int_{\mathbb{R}^{n}}\sum_{j=0}^{\infty }f_{j}^nh_{j}^n,
\end{align*}
hence
\begin{align*}
\sup_n \big\|(f_{j}^n)_{j\in \mathbb{N}_{0}}\big\|_{\ell ^{q(\cdot
)}(L^{p(\cdot )})}&=
\sup_n \big\|(f_{j}^n)_{j\in \mathbb{N}_{0}}\big\|_{\ell ^{\tilde{q}_n(\cdot
)}(L^{\tilde{p}_n(\cdot )})}\\
& = 
\sup_n  \int_{\mathbb{R}^{n}}\sum_{j=0}^{\infty }f_{j}^nh_{j}^n\\
&\leq \sup_{(g_{j})_{j\in \mathbb{N}_{0}} \in U_{p',q'}} \int_{\mathbb{R}^{n}}\sum_{j=0}^{\infty }f_j(x)|g_{j}(x)|dx,
\end{align*}
 therefore 
\begin{align*}
\big\|(f_{j})_{j\in \mathbb{N}_{0}}\big\|_{\ell ^{q(\cdot
)}(L^{p(\cdot )})}\leq \sup_{( g_{j}) _{j\in \mathbb{N
}_{0}} \in U_{p',q'}} \int_{\mathbb{R}^{n}}\sum_{j=0}^{\infty }f_{j}(x)|g_{j}(x)|dx.
\end{align*}
Similarly, we get 
\begin{align*}
\big\|(f_{j})_{j\in \mathbb{N}_{0}}\big\|_{\ell ^{q(\cdot
)}(L^\infty)}\leq \sup_{( g_{j}) _{j\in \mathbb{N
}_{0}} \in U_{1,q'}} \int_{\mathbb{R}^{n}}\sum_{j=0}^{\infty }f_{j}(x)|g_{j}(x)|dx,
\end{align*}
for every   $\left( f_{j}\right) _{j\in \mathbb{N}_{0}}\in\ell ^{q(\cdot)}(L^\infty)$ where $q\in \mathcal{P}(\mathbb{R}^{n})$ is of \textit{real values}(i.e., $q(x)\in [1,\infty)$ for a.e. $x\in \mathbb{R}^n$).
\\
\textbf{Step 3:} Let $p,q\in \mathcal{P}(\mathbb{R}^{n})$, $\left( f_{j}\right) _{j\in \mathbb{N%
}_{0}}\in\ell ^{q(\cdot)}(L^{p(\cdot )})$ , $A:=\{x\in \mathbb{R}^n | p(x)<\infty \},B:=\{x\in \mathbb{R}^n | q(x)<\infty \}$, for every $j\in \mathbb{N}_{0}$, $f_j=\chi_{A\cap B}f_j+\chi_{\mathbb{R}^n\backslash B}f_j+\chi_{B\backslash A}f_j$, by the arguments of \textit{Step 2}, we have
\begin{align*}
\big\|(f_{j})_{j\in \mathbb{N}_{0}}\big\|_{\ell ^{q(\cdot
)}(L^{p(\cdot )})}&\lesssim \big\|(\chi_{A\cap B}f_j)_{j\in \mathbb{N}_{0}}\big\|_{\ell ^{q(\cdot
)}(L^{p(\cdot )})}
+ 
\big\|(\chi_{\mathbb{R}^n\backslash B}f_j)_{j\in \mathbb{N}_{0}}\big\|_{\ell ^{\infty}(L^{p(\cdot )})}\\
&\qquad +
\big\|(\chi_{B\backslash A}f_j)_{j\in \mathbb{N}_{0}}\big\|_{\ell ^{q(\cdot
)}(L^{\infty})}\\
&\lesssim \sup_{( g_{j}) _{j\in \mathbb{N
}_{0}} \in U_{p',q'}} \int_{\mathbb{R}^{n}}\sum_{j=0}^{\infty }f_{j}(x)|g_{j}(x)|dx.
\end{align*}
 The proof is complete. 
\end{proof}

The corresponding result for the space $\ell ^{q}(L^{p(\cdot )})$ were presented in \cite[Lemma 6.1]{HN18}
where $q\in [1,\infty]$ is  constant, $p^->1$ and $p^+<\infty$. In \cite[Proposition 1]{Dua1}, the following inequality was proven, 
\begin{equation*}
\sum_{j=0}^{\infty }\int_{\mathbb{R}^n} |f_{j}(x)||g_{j}(x)|dx \leq c
\big\|(f_{j})_{j\in \mathbb{N}_{0}}\big\|_{\ell ^{q(\cdot
)}(L^{p(\cdot )})}
\big\|(g_{j})_{j\in \mathbb{N}_{0}}\big\|_{\ell ^{q'(\cdot
)}(L^{p'(\cdot )})}
\end{equation*}
for every 
$(f_{j})_{j\in \mathbb{N}_{0}}\in \ell ^{q(\cdot
)}(L^{p(\cdot )})$ and 
$(g_{j})_{j\in \mathbb{N}_{0}}\in \ell ^{q'(\cdot)}(L^{p'(\cdot )})$ of locally Lebesgue integrable functions, where 
$c=2(1+1/p^- -1/p^+)$,  $1<p^-<p^+<\infty$ and $1<q^-<q^+<\infty$. In Step 1 of Lemma \ref{duality2},  we get this inequality for arbitrary $p,q\in \mathcal{P}(\mathbb{R}^{n})$ by employing a distinct method than that used in \cite{Dua1}. 

 We finish this section by generalizing  H\"{o}lder’s inequality. By \cite[Lemma 3.2.20]{DHHR}, if $ p,q,s\in \mathcal{P}(\mathbb{R}^{n})$ are such that $1/s=1/p+1/q$, then  for every $f\in L^{p(\cdot)}$ and $g\in L^{p(\cdot)}$
\begin{equation}
\| fg\big\|_{s(\cdot)}\lesssim \| f\big\|_{p(\cdot)}\| g\big\|_{q(\cdot)}. \label{hold-in}
\end{equation} 
Similarly,  for exponents of $\mathcal{P}(\mathbb{R}^{n})$, if $1/p=1/p_1+1/p_2$ and $1/q=1/q_1+1/q_2$, then 
\begin{align}
\Vert \left( f_{j}g_j\right) _{j\in \mathbb{N}_{0}}\Vert _{\ell ^{q(\cdot
)}(L^{p\left( \cdot \right) })} \lesssim 
\Vert \left( f_{j}\right) _{j\in \mathbb{N}_{0}}\Vert _{\ell ^{q_1(\cdot
)}(L^{p_1\left( \cdot \right) })} 
\Vert \left( g_{j}\right) _{j\in \mathbb{N}_{0}}\Vert _{\ell ^{q_2(\cdot
)}(L^{p_2\left( \cdot \right) })},\label{Hold-GEN}
\end{align}
in particular if $1/p=1/p_1+1/p_2$, then 
\begin{align}
\Vert \left( f_{j}g_j\right) _{j\in \mathbb{N}_{0}}\Vert _{\ell ^{q(\cdot
)}(L^{p\left( \cdot \right) })} \lesssim 
\sup_{k\in \mathbb{N}_{0}}\big\| f_k \big\|_{p_1(\cdot)} 
\Vert \left( g_{j}\right) _{j\in \mathbb{N}_{0}}\Vert _{\ell ^{q(\cdot
)}(L^{p_2\left( \cdot \right) })}, \label{Hold-inq}
\end{align}
the proof  follows standard techniques similar to that used above 
with the aid of (\ref{hold-in}).
\section{\textbf{The Results And Their Proofs}}
\label{Mainsec}
In this section we present and prove the estimates for the commutators $\lbrack V\cdot \nabla ,\Delta _{j}]f$,
we follow  the approachs outlined in \cite{art1,HN18} and make use of some  techniques presented therein.

\subsection{\textbf{Preliminary Lemmas}}

The next lemma is a Hardy-type inequality, see  \cite[Chapter~4]{Phd-ths} for a more general statement. 

\begin{lemma}\label{thm compar}
Let $p,q\in \mathcal{P}(\mathbb{R}^{n})$, $0<a<1$  and $\{g_m\}_{m \in \mathbb{N}_0}\in \ell ^{q(\cdot )}(L^{p\left( \cdot \right) }) $ . For every $j \in \mathbb{N}_0$ and $x \in \mathbb{R}^n$, let $G_j(x)=\sum_{m \geq j } a^{m-j}g_m(x)$ and $H_j(x)=\sum_{m \leq j } a^{j-m}g_m(x)$. Then
\begin{align*}
\left\Vert (G_j)_{j \in \mathbb{N}_0}\right\Vert _{\ell ^{q(\cdot )}(L^{p\left( \cdot \right) })}
+ \left\Vert (H_j)_{j \in \mathbb{N}_0}\right\Vert _{\ell ^{q(\cdot )}(L^{p\left( \cdot \right) })}
\lesssim \left\Vert (g_m)_{m \in \mathbb{N}_0}\right\Vert _{\ell ^{q(\cdot )}(L^{p\left( \cdot \right) })}.
\end{align*}
\end{lemma}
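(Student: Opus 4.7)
The plan is to expand $(G_j)$ and $(H_j)$ as Neumann-type series in shift operators on $\ell ^{q(\cdot)}(L^{p(\cdot)})$ and then exploit the geometric factor $a < 1$ to sum the resulting series. Introduce the left shift $S$ and the right shift $T$ by
\[
(Sf)_j := f_{j+1}, \qquad (Tf)_0 := 0, \quad (Tf)_j := f_{j-1} \text{ for } j \geq 1,
\]
so that, after reindexing,
\[
G_j = \sum_{k=0}^{\infty} a^k g_{j+k} = \sum_{k=0}^{\infty} a^k (S^k g)_j, \qquad H_j = \sum_{k=0}^{\infty} a^k (T^k g)_j,
\]
with the convention $g_{j-k} := 0$ for $k > j$ in the second identity.

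The decisive observation is that the quantity $c(f) := \inf\{\lambda>0 : \|f/\lambda^{1/q(\cdot)}\|_{p(\cdot)}\leq 1\}$ depends on the function $f$ alone and not on any position index, so the modular factorises as $\varrho_{\ell^{q(\cdot)}(L^{p(\cdot)})}((f_j)_{j}) = \sum_{j=0}^{\infty} c(f_j)$. Direct inspection then gives
\[
\varrho_{\ell^{q(\cdot)}(L^{p(\cdot)})}\bigl((Sf)_j\bigr) = \sum_{k \geq 1} c(f_k) \leq \varrho_{\ell^{q(\cdot)}(L^{p(\cdot)})}(f), \qquad \varrho_{\ell^{q(\cdot)}(L^{p(\cdot)})}\bigl((Tf)_j\bigr) = \varrho_{\ell^{q(\cdot)}(L^{p(\cdot)})}(f),
\]
so scaling by $\mu$ and taking infima yields $\|S^k g\|_{\ell^{q(\cdot)}(L^{p(\cdot)})}, \|T^k g\|_{\ell^{q(\cdot)}(L^{p(\cdot)})} \leq \|g\|_{\ell^{q(\cdot)}(L^{p(\cdot)})}$ uniformly in $k \in \mathbb{N}_0$. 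The boundary cases $p(\cdot)=\infty$ or $q(\cdot)=\infty$ are handled identically via \eqref{ess sup mod} and \eqref{q=infty norm}, since both expressions are manifestly decreased or preserved under $S$ and $T$.

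Because $p, q \in \mathcal{P}(\mathbb{R}^n)$ forces $p, q \geq 1$, the Luxemburg functional $\|\cdot\|_{\ell^{q(\cdot)}(L^{p(\cdot)})}$ is a genuine norm (see \cite{AH}), so Minkowski combined with absolute convergence $\sum_{k \geq 0} a^k = (1-a)^{-1}$ produces
\[
\bigl\|(G_j)_{j\in \mathbb{N}_0}\bigr\|_{\ell^{q(\cdot)}(L^{p(\cdot)})} \leq \sum_{k=0}^{\infty} a^k \bigl\|S^k g\bigr\|_{\ell^{q(\cdot)}(L^{p(\cdot)})} \leq \frac{1}{1-a}\bigl\|(g_m)_{m \in \mathbb{N}_0}\bigr\|_{\ell^{q(\cdot)}(L^{p(\cdot)})},
\]
and analogously for $(H_j)$ with $T^k$ in place of $S^k$. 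The main obstacle in carrying this plan out is justifying the triangle inequality for the Luxemburg norm in the full generality $p, q \in \mathcal{P}(\mathbb{R}^n)$, allowing $p$ or $q$ to equal $\infty$ on sets of positive measure; once that convexity of the modular is in hand, everything else is the elementary geometric-series computation above. An alternative route, should Minkowski be awkward, would be to apply Lemma \ref{duality2}: test against $(h_j) \in U_{p',q'}$, swap the order of summation in $\sum_j \int |G_j|\,|h_j|$, and recognise the resulting inner sum as an $H$-type series for $(h_j)$ with exponents $(p',q')$, so that the $G$ and $H$ estimates could also be proven simultaneously by a self-dual closing argument.
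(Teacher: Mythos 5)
Your reindexing $G_j=\sum_{k\ge 0}a^k(S^kg)_j$, the factorisation of the modular over $j$, and the bounds $\|S^kg\|_{\ell^{q(\cdot)}(L^{p(\cdot)})}\le\|g\|_{\ell^{q(\cdot)}(L^{p(\cdot)})}$ and $\|T^kg\|_{\ell^{q(\cdot)}(L^{p(\cdot)})}\le\|g\|_{\ell^{q(\cdot)}(L^{p(\cdot)})}$ are all correct. The gap is exactly where you flag it: the Minkowski step. For variable $q$, the functional $\|\cdot\|_{\ell^{q(\cdot)}(L^{p(\cdot)})}$ is in general only a \emph{quasi}-norm, and the triangle inequality fails; the underlying modular $\varrho_{\ell^{q(\cdot)}(L^{p(\cdot)})}$ need not be convex even when $p,q\ge 1$ pointwise. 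The relevant claim in \cite{AH} asserting the norm property under conditions weaker than constant $q$ was later corrected in the literature (Kempka--Vyb\'{\i}ral, \emph{Proc.\ Amer.\ Math.\ Soc.}\ 141 (2013), 3207--3212). So ``once that convexity is in hand'' is not a deferred technicality but the whole content of the lemma, and the argument as written collapses at precisely that point. Moreover, for an \emph{infinite} sum a quasi-triangle constant $C>1$ cannot simply be absorbed, so one cannot repair this by invoking quasi-normedness.

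Your fallback via Lemma~\ref{duality2} is also circular: testing $G_j$ against $(h_j)\in U_{p',q'}$ and swapping sums produces $\sum_m\int|g_m|\,H'_m$ with $H'_m=\sum_{j\le m}a^{m-j}|h_j|$, and to close one needs $\|(H'_m)_m\|_{\ell^{q'(\cdot)}(L^{p'(\cdot)})}\lesssim 1$, which is exactly the $H$-estimate of the same lemma in the conjugate exponents. Neither half can be established without the other, so the ``self-dual closing'' has no base case.

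The paper's proof sidesteps the lack of convexity at the sequence level by working where convexity genuinely holds: since $p\ge 1$, the modular $\varrho_{p(\cdot)}$ \emph{is} convex. Given $\lambda_m$ with $\sum_m\lambda_m\le 1+\varepsilon$ and $\varrho_{p(\cdot)}(g_m/\lambda_m^{1/q(\cdot)})\le 1$, it fixes a parameter $0<\gamma<q^-$ and sets $\beta_j:=(1-a^\gamma)\sum_{m\ge j}a^{(m-j)\gamma}\lambda_m$ (similarly $\theta_j$). The exponent $\gamma$ splits the geometric decay $a^{m-j}$ into a piece $a^{(m-j)\gamma}$ that keeps $\sum_j\beta_j\le 1+\varepsilon$ and a residual piece $a^{(m-j)(1-\gamma/q^-)}$ that, after normalising, makes $cG_j/\beta_j^{1/q(\cdot)}$ a convex combination of the unit-modular functions $g_m/\lambda_m^{1/q(\cdot)}$ in $L^{p(\cdot)}$, so $\varrho_{p(\cdot)}(cG_j/\beta_j^{1/q(\cdot)})\le 1$. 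This yields $\varrho_{\ell^{q(\cdot)}(L^{p(\cdot)})}((cG_j)_j)\le 1$ without ever invoking a triangle inequality for the mixed norm. If you want to retain your shift-operator framing, you would need to replace the Minkowski step with exactly such a modular-level convexity argument.
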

\begin{proof}
By scaling arguments it suffices to suppose that  $\left\Vert (g_m)_{m \in \mathbb{N}_0}\right\Vert _{\ell ^{q(\cdot )}(L^{p\left( \cdot \right) })}\leq 1$, which is equivalent to  $\varrho _{\ell ^{q(\cdot )}(L^{p\left( \cdot \right)
})}((g_m)_{m \in \mathbb{N}_0})\leq 1$,  by definition,  for any $\varepsilon>0$ there exist $\lambda_m>0,m \in \mathbb{N}_0 $ such that 
\begin{equation*}
\sum_{m=0}^{\infty }\lambda _{m}\leq 1+\varepsilon \text{ and  } \varrho
_{p(\cdot )}\Big(\frac{g_{m}}{\lambda _{m}^{1/q(\cdot )}}\Big)\leq 1,
\end{equation*}
let $0<\gamma<q^-$, $\beta_j:=(1-a^{ \gamma}) \sum_{m\geq j} a^{(m-j)\gamma} \lambda_m$ and $ \theta_j:=
(1-a^{ \gamma}) \sum_{0\leq m\leq j} a^{(j-m)\gamma} \lambda_m$, we have 
\begin{footnotesize}
\begin{align*}
 \sum_{j=0}^{\infty }  \sum_{m\geq j} a^{(m-j)\gamma} \lambda_m &=\sum_{j=0}^{\infty } \sum_{k=0}^{\infty } a^{ k \gamma} \lambda_{j+k} \qquad & 
 \sum_{j=0}^{\infty }  \sum_{0\leq m\leq j} a^{(j-m)\gamma} \lambda_m  &=\sum_{j=0}^{\infty } \sum_{0\leq k\leq j}^{\infty } a^{ k \gamma} \lambda_{j-k} 
 \\
 &=\sum_{k=0}^{\infty } a^{ k \gamma} \sum_{j=0}^{\infty }  \lambda_{j+k} 
& &=\sum_{k=0}^{\infty } a^{ k \gamma} \sum_{j=k}^{\infty }  \lambda_{j-k}
\\
&\leq \frac{1+\varepsilon}{1-a^{ \gamma}},
& &\leq \frac{1+\varepsilon}{1-a^{ \gamma}}, 
\end{align*}
\end{footnotesize}
and 
\begin{footnotesize}
\begin{align*}
\frac{G_j(x)}{\beta_j^{1/q(x)}}&=\sum_{m \geq j } a^{m-j} (\lambda_m/\beta_j)^{1/q(x)} \frac{g_m(x)}{\lambda_m^{1/q(x)}} &
 \frac{H_j(x)}{\theta_j^{1/q(x)}}&=\sum_{0\leq m\leq j } a^{j-m} (\lambda_m/\theta_j)^{1/q(x)} \frac{g_m(x)}{\lambda_m^{1/q(x)}}
\\
&\leq  \frac{1}{(1-a^{ \gamma})^\frac{1}{q^-}}\sum_{m \geq j } a^{(m-j)(1-\gamma/q^-)} \frac{g_m(x)}{\lambda_m^{1/q(x)}}\, ,
& & \leq \frac{1}{(1-a^{ \gamma})^\frac{1}{q^-}}\sum_{0\leq m\leq j } a^{(j-m)(1-\gamma/q^-)} \frac{g_m(x)}{\lambda_m^{1/q(x)}}\,,
\end{align*}
\end{footnotesize}
therefore  $\sum_{j=0}^{\infty }\beta_j\leq 1+\varepsilon$, $\sum_{j=0}^{\infty }\theta_j\leq 1+\varepsilon$, $\varrho _{p(\cdot )}(c G_j/\beta_j^{1/q(\cdot)})\leq 1$ and 
$\varrho _{p(\cdot )}(c H_j/\theta_j^{1/q(\cdot)})\leq 1$  
  with $c=(1-a^{ \gamma})^{1/q^-}(1-a^{1-\gamma/q^-})$, these implies that $\varrho _{\ell ^{q(\cdot )}(L^{p\left( \cdot \right)})}((cG_j)_{j \in \mathbb{N}_0})\leq 1+\varepsilon$ and 
 $\varrho _{\ell ^{q(\cdot )}(L^{p\left( \cdot \right)})}((cH_j)_{j \in \mathbb{N}_0})\leq 1+\varepsilon$. By letting $\varepsilon$ go to zero we conclude that 
$\left\Vert (cG_j)_{j \in \mathbb{N}_0}\right\Vert _{\ell ^{q(\cdot )}(L^{p\left( \cdot \right) })}\leq 1$ and 
$\left\Vert (cH_j)_{j \in \mathbb{N}_0}\right\Vert _{\ell ^{q(\cdot )}(L^{p\left( \cdot \right) })}\leq 1$, which completes the proof.
\end{proof}
Let $(\mathcal{F}\varphi _{j})_{j\in \mathbb{N}_{0}}$ be a smooth dyadic
resolution of unity.\ Let $\Psi \in \mathcal{S}(\mathbb{R}^{n})$ and\ 
\begin{equation*}
\Lambda _{j,m}(f,g)(x):=\int_{\mathbb{R}^{2n}}\varphi _{j}(x-y)(\Psi
_{m}(x-z)-\Psi _{m}(y-z))f(y)\varphi _{m}\ast g(z)dydz,
\end{equation*}
where $j,m\in \mathbb{N}_{0}$ and $\Psi _{m}=2^{m}\Psi (2^{m}\cdot )$.
\begin{lemma}
\label{HN1}Let $s\in C_{\mathrm{loc}}^{\log }(\mathbb{R}^{n}),a\in \mathbb{R}%
,p,p_{1},p_{2}\in \mathcal{P}^{\log }(\mathbb{R}^{n})$, $%
q\in \mathcal{P}\left( \mathbb{R}^{n}\right) $ with $1/q\in C_{\mathrm{loc}}^{\log } (\mathbb{R}^{n})$, such that $1/p=1/p_1+1/p_2$ and $(s+a)^{-}>0$. Then%
\begin{equation*}
\sum_{j=0}^{\infty }\sum_{m=j}^{\infty }\int_{\mathbb{R}^{n}}2^{ja}\big|\Lambda
_{j,m}(f,g)(x) h_{j}(x)\big|\,dx\leq c\big\|f\big\|_{p_{1}(\cdot )}\big\|g\big\|%
_{B_{p_{2}(\cdot ),q(\cdot )}^{s(\cdot )+a}}
\end{equation*}%
holds for any sequence $(h_{j})_{j\in \mathbb{N}_{0}}$ of measurable functions that satisfies
\begin{equation}
\big\|(2^{-js(\cdot )}h_{j})_{j\in \mathbb{N}_{0}}\big\|_{\ell ^{q'(\cdot
)}(L^{p'(\cdot )})}\leq 1.\label{fun-h1}
\end{equation}
\end{lemma}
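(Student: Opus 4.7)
The plan is to first rewrite $\Lambda_{j,m}(f,g)$ in a form that exposes a commutator structure, then reduce everything to sequence–space bounds that can be attacked by the duality, H\"older, Hardy, and convolution lemmas already established.

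Performing the $z$–integration gives $\int(\Psi_m(x-z)-\Psi_m(y-z))\varphi_m\ast g(z)\,dz=\tilde g_m(x)-\tilde g_m(y)$, where $\tilde g_m:=\Psi_m\ast\varphi_m\ast g$, so
\begin{equation*}
\Lambda_{j,m}(f,g)(x)=\tilde g_m(x)\,(\varphi_j\ast f)(x)-\varphi_j\ast(f\tilde g_m)(x).
\end{equation*}
I would then use the Schwartz decay of $\varphi$ and $\Psi$ to bound $|\varphi_j\ast u|\leq c\,\eta_{j,L}\ast|u|$ and $|\tilde g_m|\leq c\,\eta_{m,L+R}\ast|\varphi_m\ast g|$ for any chosen $L,R$, producing the pointwise estimate
\begin{equation*}
|\Lambda_{j,m}(f,g)(x)|\lesssim |\tilde g_m(x)|\,\eta_{j,L}\ast|f|(x)+\eta_{j,L}\ast(|f|\,|\tilde g_m|)(x).
\end{equation*}

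Next I would normalize. Set $G_m:=2^{m(s(\cdot)+a)}\varphi_m\ast g$ so that $\|(G_m)\|_{\ell^{q(\cdot)}(L^{p_2(\cdot)})}=\|g\|_{B^{s(\cdot)+a}_{p_2(\cdot),q(\cdot)}}$, and $\tilde h_j:=2^{-js(\cdot)}h_j$ so that $\|(\tilde h_j)\|_{\ell^{q'(\cdot)}(L^{p'(\cdot)})}\leq 1$ by \eqref{fun-h1}. Applying Lemma~\ref{DHR-lemma} with $\alpha=s+a$ (choosing $R\geq c_{\log}(s+a)$) transfers the weight off the convolution, giving $|\tilde g_m(x)|\lesssim 2^{-m(s(x)+a)}\eta_{m,L}\ast|G_m|(x)$. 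The key arithmetic identity is then
\begin{equation*}
2^{ja}\cdot|h_j(x)|\cdot 2^{-m(s(x)+a)}=2^{-(m-j)(s(x)+a)}\,|\tilde h_j(x)|,
\end{equation*}
and since $(s+a)^->0$ we may bound the variable weight by the constant $A^{m-j}$ with $A:=2^{-(s+a)^-}\in(0,1)$, which is exactly the setting of the Hardy-type Lemma~\ref{thm compar}.

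For the first piece, summing over $m\geq j$ and writing $G'_j(x):=\sum_{m\geq j}A^{m-j}\eta_{m,L}\ast|G_m|(x)$, the problem reduces to $\sum_{j}\int G'_j(x)\,\eta_{j,L}\ast|f|(x)\,|\tilde h_j(x)|\,dx$. I apply Lemma~\ref{duality2} to pair against $(\tilde h_j)$, then the mixed-norm H\"older \eqref{Hold-inq} with $1/p=1/p_1+1/p_2$ and a $\sup_j$ on the $f$-factor, controlling that factor by $\|f\|_{p_1(\cdot)}$ via \eqref{eneq p norm}. Lemma~\ref{thm compar} applied to $G'_j$ and Lemma~\ref{Alm-Hastolemma1} applied to $\eta_{m,L}\ast|G_m|$ deliver the Besov factor $\|g\|_{B^{s(\cdot)+a}_{p_2(\cdot),q(\cdot)}}$. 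For the second piece, I swap the convolution via Fubini, turning $\int |\varphi_j\ast(f\tilde g_m)|\,|h_j|\,dx$ into $\int|f|\,|\tilde g_m|\,\eta_{j,L}\ast|h_j|\,dy$; then Lemma~\ref{DHR-lemma} with $\alpha=s$ extracts $2^{js(y)}$ from $\eta_{j,L}\ast(2^{js(\cdot)}|\tilde h_j|)$. The same weight bookkeeping $2^{-(m-j)(s(y)+a)}\leq A^{m-j}$ applies, and the resulting bound is handled by Lemma~\ref{duality2}, H\"older \eqref{Hold-inq}, Lemma~\ref{thm compar}, and Lemma~\ref{Alm-Hastolemma1} in the same pattern.

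The main obstacle I expect is coordinating the two pieces with the variable exponent machinery: the second (convolution-of-product) term cannot be handled directly as the first because the $2^{js(\cdot)}$ weight on $h_j$ sits inside the $\varphi_j$-convolution. The trick is to move the convolution onto $h_j$ by Fubini and only then apply Lemma~\ref{DHR-lemma} to extract the weight, which is the one step where the log-H\"older hypotheses on $s$ are genuinely needed and without which the decoupling between the $j$- and $m$-scales would break down. All remaining difficulties (endpoints $p^-=1$, $p^+=\infty$, $q$ unbounded on subsets) are absorbed by the fact that we never invoke the maximal operator, relying instead on Lemmas~\ref{Alm-Hastolemma1}--\ref{duality2}, which were established precisely for this generality.
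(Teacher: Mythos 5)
Your proposal is correct and follows essentially the same route as the paper: the same pointwise decomposition of $\Lambda_{j,m}$ into a product term and a convolution-of-product term (the paper's $H_{j,m}$ and $I_{j,m}$), the same use of Lemma~\ref{DHR-lemma} to move $2^{m(s(\cdot)+a)}$ weights across convolutions, the same exploitation of $(s+a)^->0$ to set up the Hardy-type Lemma~\ref{thm compar}, and the same combination of Lemma~\ref{duality2}, \eqref{Hold-inq}, \eqref{eneq p norm} and Lemma~\ref{Alm-Hastolemma1} to close the estimate. The one genuine divergence is in the second term: you Fubini the $\varphi_j$-convolution onto $h_j$ and then invoke Lemma~\ref{Alm-Hastolemma1} on the dual scale $\ell^{q'(\cdot)}(L^{p'(\cdot)})$ (which is legitimate, since $1/p\in C^{\log}$ and $1/q\in C^{\log}_{\mathrm{loc}}$ pass to $1/p'$ and $1/q'$), whereas the paper keeps the convolution $\eta_{j,N_2}\ast(|f|\kappa_j)$ on the primal side, pairs against $2^{-js(\cdot)}h_j$ directly, and needs Lemma~\ref{Alm-Hastolemma1} only on $\ell^{q(\cdot)}(L^{p(\cdot)})$; both are valid, and the paper's variant is marginally more economical as it avoids a second dualization.
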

\begin{proof} Since 
$\varphi ,\Psi \in \mathcal{S}(\mathbb{R}^{n})$, we have%
\begin{equation*}
|\varphi _{j}|\leq c\eta _{j,N}\quad \text{and}\quad |\Psi _{m}|\leq c\eta
_{m,N},\quad j,m\in \mathbb{N}_{0},N>n,
\end{equation*}%
where $c$  is independent of $j$ and $m$ and  $N$ can be selected sufficiently large, therefore 
\begin{align*}
|\Lambda _{j,m}(f,g)(x)| &\lesssim  \int_{\mathbb{R}^{2n}}\eta _{j,N}(x-y)\eta _{m,N}(x-z)|f(y)||\varphi
_{m}\ast g(z)|dydz \\
&\quad +\int_{\mathbb{R}^{2n}}\eta _{j,N}(x-y)\eta _{m,N}(y-z)|f(y)||\varphi
_{m}\ast g(z)|dydz \\
&= c H_{j,m}(x)+c I_{j,m}(x)
\end{align*}%
where
\begin{align}
H_{j,m}(x)&:= (\eta _{j,N}\ast |f|(x))(\eta _{m,N}\ast |\varphi _{m}\ast
g|(x)); \label{firsttrm} \\
I_{j,m}(x)&:=\eta _{j,N}\ast (|f|\eta _{m,N}\ast |\varphi _{m}\ast
g|)(x),  \label{second}
\end{align}
for all $x\in \mathbb{R}^{n},j,m\in \mathbb{N}_{0}.$
We begin by estimating the term (\ref{firsttrm}),
we have $s\in C_{\mathrm{loc}}^{\log }(\mathbb{R}^{n})$ then by Lemma \ref{DHR-lemma},
\begin{equation}
\sum_{j=0}^{\infty }\sum_{m=j}^{\infty }\int_{\mathbb{R}%
^{n}}2^{ja}H_{j,m}(x)|h_{j}(x)|dx \lesssim 
\int_{\mathbb{R}^{n}}\sum_{j=0}^{\infty
} \eta _{j,N}\ast |f|(x) \vartheta_j(x)
2^{-js(x)}|h_{j}(x)|dx
 \label{est1}
\end{equation}%
where 
\begin{align*}
\vartheta_j(x):= \sum_{m=j}^{\infty }2^{(j-m)(a+s)^-}  \eta _{m,N_{1}}\ast (2^{m(a+s(\cdot
))}|\varphi _{m}\ast g|)(x),\, x\in \mathbb{R}^{n},j\in \mathbb{N}_{0}
\end{align*}
and $N_1>n$ is sufficiently  large, by Lemmas \ref{thm compar} and  \ref{Alm-Hastolemma1} we have
\begin{align*}
\big\|(\vartheta_j
)_{j\in \mathbb{N}_{0}}\big\|_{\ell ^{q(\cdot
)}(L^{p_2(\cdot )})} \lesssim 
\big\|(2^{j(a+s(\cdot
))}|\varphi _{j}\ast g|
)_{j\in \mathbb{N}_{0}}\big\|_{\ell ^{q(\cdot
)}(L^{p_2(\cdot )})}, 
\end{align*}
therefore by  Lemma \ref{duality2} and inequalities (\ref{fun-h1}), (\ref{eneq p norm}) and (\ref{Hold-inq}), 
\begin{align*}
\sum_{j=0}^{\infty }\sum_{m=j}^{\infty }\int_{\mathbb{R}%
^{n}}2^{ja}H_{j,m}(x)|h_{j}(x)|dx&\lesssim
\big\|(\eta _{j,N}\ast |f|(\cdot)\vartheta_j(\cdot))_{j\in \mathbb{N}_{0}}\big\|_{\ell ^{q(\cdot
)}(L^{p(\cdot )})} \\
&\lesssim \sup_{k} \big\|\eta _{k,N}\ast |f|\big\|_{p_{1}(\cdot )}
\big\|( \vartheta_j
)_{j\in \mathbb{N}_{0}}\big\|_{\ell ^{q(\cdot
)}(L^{p_2(\cdot )})}\\
&\lesssim \big\|f\big\|_{p_{1}(\cdot )}\big\|g\big\|%
_{B_{p_{2}(\cdot ),q(\cdot )}^{s(\cdot )+a}}.
\end{align*}

Now, with similar arguments we estimate the term $\mathrm{
\eqref{second}}$ as follows:
\begin{align*}
\sum_{j=0}^{\infty }\sum_{m=j}^{\infty }\int_{\mathbb{R}%
^{n}}2^{ja}I_{j,m}(x)|h_{j}(x)|dx 
\lesssim \int_{\mathbb{R}^{n}}\sum_{j=0}^{\infty
}2^{-js(x)}|h_{j}(x)|\eta _{j,N_2}\ast (|f| \kappa _{j})(x)dx 
\end{align*}
where 
\begin{equation*}
\kappa _{j}(x):= \sum_{m=j}^{\infty } 2^{(j-m)(a+s)^-} \eta _{m,N_3}\ast |2^{m(a+s(\cdot))} \varphi _{m}\ast
g|(x),\, x\in \mathbb{R}^{n},j\in \mathbb{N}_{0},
\end{equation*}
with $N_2,N_3>n$ sufficiently  large. Again, Lemma \ref{duality2}, inequalities (\ref{fun-h1}), (\ref{Hold-inq}) and Lemmas  \ref{thm compar}, \ref{Alm-Hastolemma1} yield
\begin{align*}
\sum_{j=0}^{\infty }\sum_{m=j}^{\infty }\int_{\mathbb{R}%
^{n}}2^{ja}I_{j,m}(x)|h_{j}(x)|dx  &\lesssim  \big\| (|f| \kappa _{j})_{j\in \mathbb{N}_{0}}\big\|_{\ell ^{q(\cdot
)}(L^{p(\cdot )})}\\
&\lesssim \big\|f\big\|_{p_{1}(\cdot )}
\big\|(\kappa _{j}
)_{j\in \mathbb{N}_{0}}\big\|_{\ell ^{q(\cdot
)}(L^{p_2(\cdot )})}
\\
&\lesssim \big\|f\big\|_{p_{1}(\cdot )}
\big\|(\eta _{m,N_3}\ast |2^{m(a+s(\cdot))} \varphi _{m}\ast
g|
)_{m\in \mathbb{N}_{0}}\big\|_{\ell ^{q(\cdot
)}(L^{p_2(\cdot )})}\\
&\lesssim \big\|f\big\|_{p_{1}(\cdot )}\big\|g\big\|%
_{B_{p_{2}(\cdot ),q(\cdot )}^{s(\cdot )+a}},
\end{align*}%
which completes the proof.
\end{proof}

For $0\leq m\leq j,j,m\in \mathbb{N}_{0}$, $x\in \mathbb{R}^{n}$ and $K\in 
\mathbb{N}$, we set%
\begin{align*}
E_{j,m,K}(f,g)(x)&=2^{(m-j)K}\int_{\mathbb{R}^{2n}}\eta _{j,N}(x-y)\eta
_{m,N}(x-z)|f(y)||\varphi _{m}\ast g(z)|dydz \\
&\quad +2^{(m-j)K}\int_{\mathbb{R}^{2n}}\eta _{j,N}(x-y)\eta
_{m,N}(y-z)|f(y)||\varphi _{m}\ast g(z)|dydz,
\end{align*}%
where $N>n$ is large enough.
\begin{lemma}
\label{HN2}Let $s\in C_{\mathrm{loc}}^{\log }(\mathbb{R}^{n}),a\in \mathbb{R},K\in \mathbb{N}%
,p,p_{1},p_{2}\in \mathcal{P}^{\log }(\mathbb{R}^{n})$ and $%
q\in \mathcal{P}(\mathbb{R}^{n}) $ with $1/q\in C_{\mathrm{loc}}^{\log } (\mathbb{R}^{n})$. Assume that $1/p=1/p_1+1/p_2$ and $(s+a)^{+}<K$. Then%
\begin{equation*}
\sum_{j=0}^{\infty }\sum_{m=0}^{j}\int_{\mathbb{R}%
^{n}}2^{ja}E_{j,m,K}(f,g)(x)|h_{j}(x)|dx\lesssim \big\|f\big\|_{p_{1}(\cdot
)}\big\|g\big\|_{B_{p_{2}(\cdot ),q(\cdot )}^{s(\cdot )+a}}
\end{equation*}%
holds  for any sequence $(h_{j})_{j\in \mathbb{N}_{0}}$ of measurable functions that satisfies
\begin{equation}
\big\|(2^{-js(\cdot )}h_{j})_{j\in \mathbb{N}_{0}}\big\|_{\ell ^{q'(\cdot
)}(L^{p'(\cdot )})}\leq 1.\label{fun-h}
\end{equation}
\end{lemma}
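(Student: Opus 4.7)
The plan is to mirror Lemma \ref{HN1} with a crucial sign change coming from the new range $0 \leq m \leq j$, the artificial damping factor $2^{(m-j)K}$, and the reversed hypothesis $(s+a)^+ < K$. First I split $E_{j,m,K}(f,g)(x)$ into its two summands
\begin{align*}
H^{K}_{j,m}(x) &:= 2^{(m-j)K}\,(\eta_{j,N}\ast|f|)(x)\,(\eta_{m,N}\ast|\varphi_m\ast g|)(x), \\
I^{K}_{j,m}(x) &:= 2^{(m-j)K}\,\eta_{j,N}\ast\bigl(|f|\,(\eta_{m,N}\ast|\varphi_m\ast g|)\bigr)(x),
\end{align*}
and handle their contributions to the double sum separately, exactly as the terms $H_{j,m}$ and $I_{j,m}$ were handled in Lemma \ref{HN1}.

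The key algebraic identity is
$$
2^{ja}\,2^{(m-j)K} = 2^{-js(x)}\cdot 2^{m(s(x)+a)}\cdot 2^{(j-m)[s(x)+a-K]},
$$
which I use to distribute the exponential weights: the first factor pairs with $h_j$ via the assumption \eqref{fun-h}; the second is pulled inside the inner $\eta_{m,N+R}$ convolution by Lemma \ref{DHR-lemma} (taking $R\geq c_{\log}(s)$), producing $\psi_m(x) := \eta_{m,N}\ast(2^{m(s(\cdot)+a)}|\varphi_m\ast g|)(x)$; and the third yields the clean geometric decay $b^{j-m}$ with $b := 2^{(s+a)^+ - K} \in (0,1)$, which is where the hypothesis $(s+a)^+ < K$ enters. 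For the $I^{K}_{j,m}$ term I first apply Lemma \ref{DHR-lemma} once more to transfer $2^{j(s(x)+a)}$ inside the outer $\eta_{j,N+R}$ convolution before repeating the manoeuvre on the inner one.

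After these manipulations the $H^{K}_{j,m}$-contribution is dominated by
$$
\int_{\mathbb{R}^n}\sum_{j=0}^\infty 2^{-js(x)}|h_j(x)|\,(\eta_{j,N}\ast|f|)(x)\,\Theta_j(x)\,dx, \qquad \Theta_j(x):=\sum_{m=0}^j b^{j-m}\psi_m(x),
$$
and the $I^{K}_{j,m}$-contribution by the same quantity with $(\eta_{j,N}\ast|f|)\,\Theta_j$ replaced by $\eta_{j,N}\ast(|f|\,\Theta_j)$. To finish, I apply Lemma \ref{duality2} with the sequence $(2^{-js(\cdot)}h_j)_{j\in\mathbb{N}_0}\in U_{p',q'}$, then the generalized H\"older inequality \eqref{Hold-inq} together with \eqref{eneq p norm} (and Lemma \ref{Alm-Hastolemma1} for the outer $\eta_{j,N}$-convolution in the $I^{K}_{j,m}$ case), then the Hardy-type Lemma \ref{thm compar} to pass from $\Theta_j$ back to $\psi_m$, and finally Lemma \ref{Alm-Hastolemma1} once more to identify $\bigl\|(\psi_m)\bigr\|_{\ell^{q(\cdot)}(L^{p_2(\cdot)})}$ with $\|g\|_{B^{s(\cdot)+a}_{p_2(\cdot),q(\cdot)}}$.

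The main obstacle is the careful bookkeeping of the exponents: unlike in Lemma \ref{HN1}, where the decay in $m-j$ is free once one observes $(s+a)^- > 0$, here the decay must be \emph{manufactured} by pairing the artificial factor $2^{(m-j)K}$ against the worst pointwise value of $s+a$, so the strict inequality $(s+a)^+ < K$ is essential and must be invoked at exactly the point where the exponent identity above is split. Once the clean geometric factor $b^{j-m}$ with $b<1$ is isolated, the remainder is a mechanical application of the same toolkit used in Lemma \ref{HN1}.
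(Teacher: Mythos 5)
Your proposal is correct and follows essentially the same route as the paper: the same split of $E_{j,m,K}$ into the two summands $H_{j,m}$ and $I_{j,m}$ (you append the superscript $K$ but it is the same decomposition), the same use of Lemma \ref{DHR-lemma} to move the weights inside the convolutions, and the same quantity $\Theta_j$ (your $\sum_{m=0}^j b^{j-m}\psi_m$ with $b=2^{(s+a)^+-K}$ is precisely the paper's $\vartheta_j=\sum_{m=0}^j 2^{(m-j)(K-(s+a)^+)}\eta_{m,N_1}\ast(2^{m(s(\cdot)+a)}|\varphi_m\ast g|)$), after which both arguments conclude by Lemma \ref{duality2}, the H\"older inequality \eqref{Hold-inq} together with \eqref{eneq p norm}, Lemma \ref{thm compar}, and Lemma \ref{Alm-Hastolemma1}. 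Your exponent identity $2^{ja}2^{(m-j)K}=2^{-js(x)}2^{m(s(x)+a)}2^{(j-m)[s(x)+a-K]}$ is correct, and the hypothesis $(s+a)^+<K$ is invoked at exactly the point the paper uses it, namely to make $b<1$.
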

\begin{proof}  Let $0\leq m\leq j,j,m\in \mathbb{N}_{0}$, for every $x\in \mathbb{R}^{n}$ we have 
\begin{eqnarray*}
2^{ja}E_{j,m,K}(f,g)(x) &\lesssim &2^{(m-j)K+aj}\big(\eta _{j,N}\ast
|f|(x)\eta _{m,N}\ast |\varphi _{m}\ast g|(x)+I_{j,m}(x)\big) \\
&=&2^{(m-j)K+aj}\big(H_{j,m}(x)+I_{j,m}(x)\big),
\end{eqnarray*}%
where $H_{j,m}$ and $I_{j,m}$ are defined in (\ref{firsttrm}) and  (\ref{second}) respectively, and $N$ is large enough.  Lemma \ref{DHR-lemma}  yields
\begin{equation*}
\int_{\mathbb{R}^{n}} \sum_{j=0}^{\infty }\sum_{m=0}^{j}2^{(m-j)K+aj}H_{j,m}(x) |h_j(x)|\,dx
\lesssim \int_{\mathbb{R}^{n}} \sum_{j=0}^{\infty } \eta _{j,N}\ast|f|(x)\vartheta _{j}(x) 2^{-js(x)}|h_j(x)|
\,dx,
\end{equation*}%
where
\begin{eqnarray*}
\vartheta _{j}(x):=\sum_{m=0}^{j}2^{(m-j)(K-(s+a)^+)}\eta _{m,N_{1}}\ast
(2^{m(s(\cdot )+a)}|\varphi _{m}\ast g|)(x),\, x\in \mathbb{R}^{n},j\in \mathbb{N}_{0},
\end{eqnarray*}
and $N_1>n$ is sufficiently large, by Lemma \ref{duality2}, inequalities (\ref{fun-h}) , (\ref{Hold-inq}) and Lemmas  \ref{thm compar}, \ref{Alm-Hastolemma1}, we have
\begin{align*}
\int_{\mathbb{R}^{n}} \sum_{j=0}^{\infty }\sum_{m=0}^{j}2^{(m-j)K+aj}H_{j,m}(x) |h_j(x)|\,dx \lesssim \big\|f\big\|_{p_{1}(\cdot )}\big\|g\big\|%
_{B_{p_{2}(\cdot ),q(\cdot )}^{s(\cdot )+a}}.
\end{align*}

With similar arguments we prove the following estimate:
\begin{align*}
\int_{\mathbb{R}^{n}} \sum_{j=0}^{\infty }\sum_{m=0}^{j}2^{(m-j)K+aj}I_{j,m}(x) |h_j(x)|\,dx
\lesssim \int_{\mathbb{R}^{n}} \sum_{j=0}^{\infty} \eta _{j,N_2}\ast( |f| \kappa_j)(x) 2^{-js(x)}|h_j(x)|  \,dx
\end{align*}
where
\begin{align*}
\kappa_j(x):=\sum_{m=0}^{j}  2^{(m-j)(K-(s+a)^+)}\eta _{m,N_3}  \ast\big( 2^{m(s(\cdot )+a)} |\varphi _{m}\ast
g|\big)(x), \, x\in \mathbb{R}^{n},j\in \mathbb{N}_{0},
\end{align*}
with $N_2,N_3>n$. Again,  Lemma \ref{duality2}, inequalities (\ref{fun-h1}) ,(\ref{Hold-inq}) and Lemmas \ref{thm compar}, \ref{Alm-Hastolemma1} yield
\begin{align*}
\int_{\mathbb{R}^{n}} \sum_{j=0}^{\infty }\sum_{m=0}^{j}2^{(m-j)K+aj}I_{j,m}(x) |h_j(x)|\,dx \lesssim \big\|f\big\|_{p_{1}(\cdot )}\big\|g\big\|%
_{B_{p_{2}(\cdot ),q(\cdot )}^{s(\cdot )+a}},
\end{align*}%
which completes the proof.
\end{proof}
\subsection{\textbf{Main Results}}
\begin{theorem}
\label{result1}Let $s\in C_{\mathrm{loc}}^{\log }(\mathbb{R}^{n}),s^{-}>0,p,p_{1},p_{2}\in 
\mathcal{P}^{\log }(\mathbb{R}^{n})$, $q\in \mathcal{P} (\mathbb{R}^{n})$ with $1/q\in C_{\mathrm{loc}}^{\log } (\mathbb{R}^{n})$ and
$1/p=1/p_1+1/p_2$. Let $V=(V_{1},...,V_{n})\in \left( \mathcal{S}(\mathbb{R}^{n})\right) ^{n}$
be a vector field. Then for any $f\in \mathcal{S}(\mathbb{R}^{n})$%
\begin{align}
\big\|( 2^{js(\cdot )}[V\cdot \nabla ,\Delta _{j}]f )_{j\in \mathbb{N}_{0}}\big\|_{\ell ^{q(\cdot)}(L^{p(\cdot )})} \lesssim 
\big\|\nabla f%
\big\|_{p_{1}(\cdot )}\big\|V\big\|_{B_{p_{2}(\cdot ),q(\cdot )}^{s(\cdot
)}}+A  \label{Est-th1.1}
\end{align}
where%
\begin{equation*}
A=\big\|\nabla V\big\|_{p_{1}(\cdot )}\big\|f\big\|_{B_{p_{2}(\cdot
),q(\cdot )}^{s(\cdot )}}\quad \text{or}\quad A=\big\|V\big\|_{p_{1}(\cdot )}%
\big\|\nabla f\big\|_{B_{p_{2}(\cdot ),q(\cdot )}^{s(\cdot )}}.
\end{equation*}
And 
\begin{align}
\big\|( 2^{js(\cdot )}[V\cdot \nabla ,\Delta _{j}]f )_{j\in \mathbb{N}_{0}}\big\|_{\ell ^{q(\cdot)}(L^{p(\cdot )})} &\lesssim 
\big\|f\mathrm{div}(V)\big\|_{B_{p(\cdot ),q(\cdot )}^{s(\cdot )}}+%
\big\|\nabla V\big\|_{p_{1}(\cdot )}\big\|f\big\|_{B_{p_{2}(\cdot ),q(\cdot
)}^{s(\cdot )}}\notag \\ 
&\hspace{4cm}+\big\|f\big\|_{p_{1}(\cdot )}\big\|V\big\|_{B_{p_{2}(\cdot
),q(\cdot )}^{s(\cdot )+1}}.  \label{Est-th1.1.1}
\end{align}
\end{theorem}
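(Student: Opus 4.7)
The proof follows the paraproduct-free strategy of \cite{art1, HN18} adapted to the variable-index setting. The starting point is the integral representation
\[
[V\cdot\nabla,\Delta_j]f(x) = \int_{\mathbb{R}^n}\varphi_j(x-y)(V(x)-V(y))\cdot\nabla f(y)\,dy,
\]
obtained by writing $V(x)\cdot\nabla\Delta_j f(x)-\Delta_j(V\cdot\nabla f)(x)$ as a single integral against $\varphi_j(x-y)$. Using the Littlewood--Paley decomposition $V_k=\sum_{m\ge 0}\varphi_m\ast V_k$ together with an auxiliary $\Psi\in\mathcal{S}(\mathbb{R}^n)$ chosen so that $\Psi_m\ast(\varphi_m\ast V_k)=\varphi_m\ast V_k$ (i.e.\ $\mathcal{F}\Psi_m\equiv 1$ on $\mathrm{supp}\,\mathcal{F}\varphi_m$), one rewrites $V_k(x)=\sum_m\int\Psi_m(x-z)\,\varphi_m\ast V_k(z)\,dz$ and arrives at
\[
[V\cdot\nabla,\Delta_j]f(x)=\sum_{k=1}^n\sum_{m=0}^{\infty}\Lambda_{j,m}(\partial_k f,V_k)(x),
\]
which is precisely the bilinear form that Lemmas \ref{HN1} and \ref{HN2} were designed to control.

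Next I would split $\sum_{m=0}^{\infty}=\sum_{m\ge j}+\sum_{m<j}$. For $m\ge j$, Lemma \ref{HN1} with $a=0$ and $(f,g)=(\partial_k f,V_k)$ yields the contribution $\|\nabla f\|_{p_1(\cdot)}\|V\|_{B^{s(\cdot)}_{p_2(\cdot),q(\cdot)}}$. For $m<j$, the function $z\mapsto\Psi_m(\cdot-z)$ varies on scale $2^{-m}\gg 2^{-j}$, the scale on which $\varphi_j$ is essentially concentrated; Taylor-expanding $\Psi_m(y-z)-\Psi_m(x-z)$ in $y-x$ up to order $K-1$ and invoking the vanishing moments of $\varphi_j$ (valid for $j\ge 1$, with the $j=0$ terms absorbed trivially into the $m\ge j$ sum) gives the pointwise bound
\[
|\Lambda_{j,m}(\partial_k f,V_k)(x)|\lesssim E_{j,m,K}(\partial_k f,V_k)(x),\qquad m<j,
\]
for any $K>s^+$, and Lemma \ref{HN2} handles this contribution. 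The two bounds are assembled via the duality Lemma \ref{duality2}: setting $\tilde h_j=2^{js(\cdot)}h_j$ converts
\[
\big\|(2^{js(\cdot)}[V\cdot\nabla,\Delta_j]f)_j\big\|_{\ell^{q(\cdot)}(L^{p(\cdot)})}\approx\sup_{\tilde h}\sum_{j}\int_{\mathbb{R}^n}\big|[V\cdot\nabla,\Delta_j]f(x)\big|\,|\tilde h_j(x)|\,dx,
\]
where the supremum is over sequences with $\|(2^{-js(\cdot)}\tilde h_j)\|_{\ell^{q'(\cdot)}(L^{p'(\cdot)})}\le 1$, i.e.\ exactly the hypothesis \eqref{fun-h1}/\eqref{fun-h}. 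This proves the first form of \eqref{Est-th1.1}; the alternative $A=\|V\|_{p_1(\cdot)}\|\nabla f\|_{B^{s(\cdot)}_{p_2(\cdot),q(\cdot)}}$ follows by exchanging the H\"older partition $1/p=1/p_1+1/p_2$ in the proofs of the preliminary lemmas, placing $\nabla f$ in the Besov factor and $V$ in $L^{p_1(\cdot)}$.

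For \eqref{Est-th1.1.1} I would use $V\cdot\nabla f=\mathrm{div}(fV)-f\,\mathrm{div}(V)$ combined with integration by parts against $\varphi_j$ to derive
\[
[V\cdot\nabla,\Delta_j]f(x)=\Delta_j(f\,\mathrm{div}(V))(x)+\int_{\mathbb{R}^n}\nabla\varphi_j(x-y)\cdot(V(x)-V(y))f(y)\,dy.
\]
The first term gives $\|f\,\mathrm{div}(V)\|_{B^{s(\cdot)}_{p(\cdot),q(\cdot)}}$ by the definition of the Besov norm. For the second term, $\nabla\varphi_j=2^j\tilde\varphi_j$ with $\tilde\varphi=\nabla\varphi\in\mathcal{S}(\mathbb{R}^n)$ carrying vanishing moments of all orders, so the same $\Psi_m$-decomposition applies and produces $2^j\sum_m\tilde\Lambda_{j,m}(f,V_k)$; repeating the high/low-frequency splitting with the analogues of Lemmas \ref{HN1}--\ref{HN2} at $a=1$, once with each of the two H\"older partitions, produces the remaining contributions $\|f\|_{p_1(\cdot)}\|V\|_{B^{s(\cdot)+1}_{p_2(\cdot),q(\cdot)}}$ and $\|\nabla V\|_{p_1(\cdot)}\|f\|_{B^{s(\cdot)}_{p_2(\cdot),q(\cdot)}}$. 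The main obstacle throughout is the $m<j$ regime in the endpoint cases $p^-=1$, $p^+=\infty$, or $q^+=\infty$, where the Hardy--Littlewood maximal operator is unavailable on $\ell^{q(\cdot)}(L^{p(\cdot)})$: every averaging step must be routed through the Peetre-type kernels $\eta_{j,m}$ via \eqref{eneq p norm} and Lemma \ref{Alm-Hastolemma1}, the variable smoothness $s(\cdot)$ must be absorbed into these convolutions by Lemma \ref{DHR-lemma}, and the usual maximal-function reduction (which would force $p^->1$ and constant $q$) is replaced once and for all by the test-function argument provided by the duality Lemma \ref{duality2}.
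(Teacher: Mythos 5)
There is a genuine gap in your treatment of the low-frequency regime $m<j$, and it is precisely the place from which the extra term $A$ in \eqref{Est-th1.1} originates. You assert that Taylor-expanding $\Psi_m(y-z)-\Psi_m(x-z)$ in $y-x$ and ``invoking the vanishing moments of $\varphi_j$'' yields the pointwise bound $|\Lambda_{j,m}(\partial_k f,V_k)|\lesssim E_{j,m,K}(\partial_k f,V_k)$ for $m<j$. That is false. The vanishing moments of $\varphi_j$ would kill $\int\varphi_j(x-y)(y-x)^\alpha\,dy$, but the integrand here carries the extra factor $\partial_k f(y)$, so the Taylor polynomial terms $\frac{1}{\alpha!}\partial^\alpha\Psi_m(x-z)(y-x)^\alpha$ with $1\le|\alpha|<K$ do not disappear. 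They become (as the paper records from \cite[Lemma 3.1]{HN18}) the quantities $I_{j,m,\alpha,k}=2^{|\alpha|(m-j)}(\theta_{j,\alpha}*\partial_k f)\,(\partial^\alpha\varphi)_m*\varphi_m*V_k$; only the Taylor remainder is dominated by $E_{j,m,K}$. If your claimed pointwise bound were true, the full sum over $m<j$ would be controlled by Lemma \ref{HN2} alone, and \eqref{Est-th1.1} would hold with $A=0$ for every $s^->0$ — but that is exactly the first estimate of Theorem \ref{result2}, which the paper states only under the restriction $s^+<1$, corresponding to $K=1$ where the $I$-sum is empty. For general $s^+$ you must take $K>s^+$ and confront the $I_{j,m,\alpha,k}$ terms directly.

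Estimating the $I_{j,m,\alpha,k}$ terms is where the two alternative forms of $A$ come from, and it is the part your proposal omits. Concretely, the paper exploits $\theta_{j,\alpha}*\partial_k f=\theta_{j,\alpha}*\tilde\varphi_j*\partial_k f$ and then either (a) bounds $|I_{j,m,\alpha,k}|\lesssim 2^{m-j}(\eta_{j,N}*|\tilde\varphi_j*\partial_k f|)(\eta_{m,N}*|\varphi_m*V_k|)$, feeds the geometric decay in $m-j$ into a Hardy-type sum, and applies Hölder with $V_k\in L^{p_1(\cdot)}$ and $\partial_k f$ in the Besov factor, giving $A=\|V\|_{p_1(\cdot)}\|\nabla f\|_{B^{s(\cdot)}_{p_2(\cdot),q(\cdot)}}$; or (b) rewrites $\partial^\alpha\varphi=\partial^{\alpha-e_\alpha}\partial_{i_\alpha}\varphi$ to transfer one derivative from $(\partial^\alpha\varphi)_m$ onto $V_k$ at cost $2^{-m}$, using the telescoping identity $\sum_{m\le j}\varphi_m*\varphi_m=(\varphi_0)_j$ when $|\alpha|=1$, giving $A=\|\nabla V\|_{p_1(\cdot)}\|f\|_{B^{s(\cdot)}_{p_2(\cdot),q(\cdot)}}$. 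Your proposal's appeal to ``exchanging the Hölder partition'' in the preliminary lemmas cannot reproduce these two estimates, because the lemmas are symmetric in $(p_1,p_2)$ only after the function roles are fixed, while passing from (a) to (b) actually moves a derivative between $f$ and $V$ — it changes which function carries $\nabla$, not merely which Lebesgue exponent it sits in. The same gap recurs in your treatment of \eqref{Est-th1.1.1}: the term $J^2=2^j\Lambda_{j,m}(f,V_k)$ (with $(\partial_k\varphi)_j$ in place of $\varphi_j$) again requires $K>s^++1$, hence nonempty $I$-sums. The rest of your outline — integral representation, Littlewood--Paley decomposition of $V$ with an auxiliary $\Psi$, high/low split, duality via Lemma \ref{duality2}, routing averages through $\eta_{j,m}$ and Lemma \ref{Alm-Hastolemma1} to avoid the maximal operator — matches the paper's strategy and is sound.
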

\begin{proof}\qquad\\
\textbf{Step 1.} \textit{Preparation.}
 Let $V=(V_{1},...,V_{n})\in \left(\mathcal{S}(\mathbb{R}^{n})\right) ^{n}$ and $f\in \mathcal{S}(\mathbb{R}%
^{n})$.
For every $x\in \mathbb{R}^n$ we have
\begin{eqnarray*}
\lbrack V\cdot \nabla ,\Delta _{j}]f(x) &=&\sum_{k=1}^{n}V_{k}(x)\partial
_{k}\Delta _{j}f(x)-\Delta _{j}(V_{k}\partial _{k}f)(x) \\
&=&\sum_{k=1}^{n}\int_{\mathbb{R}^{n}}\varphi
_{j}(x-y)(V_{k}(x)-V_{k}(y))\partial _{k}f(y)dy.
\end{eqnarray*}%
Let $(\mathcal{F}\varphi _{j})_{j\in \mathbb{N}_{0}}$\ be a smooth dyadic
resolution of unity. Then
there exist $\Psi_0,\Psi \in \mathcal{S}(\mathbb{R}^{n})$ such that, for all $\xi\in \mathbb{R}^n$,
\begin{equation*}
(\mathcal{F}\phi_0)(\xi)(\mathcal{F}\Psi_0)(\xi)+\sum_{j\in \mathbb{N}} (\mathcal{F}\phi) (2^{-j}\xi) (\mathcal{F}\Psi)(2^{-j}\xi)=1,
\end{equation*} 
therefore $V=\sum_{m\in \mathbb{N}_0}\Psi_m*\phi_m*V$. It follows, for every $x\in \mathbb{R}^{n}$ and every $j\in \mathbb{N}_{0}$,
\begin{align*}
\lbrack V\cdot \nabla ,\Delta _{j}]f(x) &=\sum_{m=0}^{\infty
}\sum_{k=1}^{n}\int_{\mathbb{R}^{2n}}\varphi _{j}(x-y)(\Psi_m(x-z)-\Psi_m(y-z))\partial _{k}f(y)\varphi_{m} \ast
V_{k} (z)\,dz dy \\
&=\sum_{m=0}^{\infty }\sum_{k=1}^{n}\Pi _{j,m,k}(\partial _{k}f,V_{k})(x) \\
&=\sum_{m=0}^{j}\cdot \cdot \cdot +\sum_{m=j+1}^{\infty }\cdot \cdot \cdot,
\end{align*}
to estimate $\lbrack V\cdot \nabla ,\Delta _{j}]f$ in $\ell ^{q(\cdot)}(L^{p(\cdot )})$-norm  we need only to estimate 
\begin{equation}
\left( \sum_{m=0}^{j}\sum_{k=1}^{n}\Pi _{j,m,k}(\partial _{k}f,V_{k})\right)%
_{j\in \mathbb{N}_{0}}\quad \text{and}\quad \left(\sum_{m=j+1}^{\infty
}\sum_{k=1}^{n}\Pi _{j,m,k}(\partial _{k}f,V_{k})\right)_{j\in \mathbb{N}_{0}}
\label{est-th1}
\end{equation}%
in $\ell^{q(\cdot )}( L^{p(\cdot )})$-norm. From Lemma \ref{duality2} we need to estimate%
\begin{equation*}
\int_{\mathbb{R}^{n}}\sum_{j=0}^{\infty }\big|[V\cdot \nabla ,\Delta
_{j}]f(x)h_{j}(x)\big|dx
\end{equation*}%
for every sequence $(h_{j})_{j\in \mathbb{N}_{0}}$ of measurable functions that satisfies 
\begin{equation}
\big\|(2^{-js(\cdot )}h_{j})_{j\in \mathbb{N}_{0}}\big\|_{\ell ^{q'(\cdot
)}(L^{p'(\cdot )})}\leq 1.\label{duh}
\end{equation}
From \cite[Lemma
3.1]{HN18} we derive%
\begin{eqnarray}
\Pi _{j,m,k}(\partial _{k}f,V_{k}) &=&\sum_{1\leq |\alpha |<K}2^{|\alpha
|(m-j)}(\theta _{j,\alpha }\ast \partial _{k}f)(\partial ^{\alpha }\varphi
)_{m}\ast \varphi _{m}\ast V_{k}+\Theta _{j,m,K,k}(\partial _{k}f,V_{k}), 
\notag \\
&=&\sum_{1\leq |\alpha |<K}I_{j,m,\alpha,k}+\Theta
_{j,m,K,k}(\partial _{k}f,V_{k}),  \label{sum1}
\end{eqnarray}%
where%
\begin{align*}
\Theta _{j,m,K,k}(\partial _{k}f,V_{k})(x)=\int_{\mathbb{R}^{2n}}\varphi
_{j}(x-y)\big(\sum_{|\alpha |=K}\frac{1}{\alpha !}(\partial ^{\alpha
}\varphi _{m})(\xi _{\alpha })(y-x)^{\alpha }\big)\partial _{k}f(y)\varphi
_{m}\ast V_{k}(z)dydz,
\end{align*}%
$\xi _{\alpha }$ is on the line segment joining $y-z$ and $x-z$ and%
\begin{equation*}
\theta _{j,\alpha }(x)=\frac{(-1)^{|\alpha |}}{\alpha !}(2^{j}x)^{\alpha
}\varphi _{j}(x),\quad x\in \mathbb{R}^{n},j\in \mathbb{N}_{0}.
\end{equation*}%
When $K=1$, the sum on the right-hand side of $\mathrm{\eqref{sum1}}$ is
interpreted as zero. Again from \cite[Lemma 3.1]{HN18},%
\begin{equation}
|\Theta _{j,m,K,k}(\partial _{k}f,V_{k})|\lesssim E_{j,m,K}(\partial
_{k}f,V_{k}),\quad 0\leq m\leq j,m,j\in \mathbb{N}_{0}. \label{imp-ine}
\end{equation}\\
\textbf{Step 2.} In this step we prove $\mathrm{\eqref{Est-th1.1}}$. For
a smooth dyadic resolution of unity $(\mathcal{F}\varphi _{j})_{j\in \mathbb{N}_{0}}$
we have%
\begin{equation*}
\Pi _{j,m,k}(\partial _{k}f,V_{k})(x)=\Lambda _{j,m}(\partial
_{k}f,V_{k})(x),\quad x\in \mathbb{R}^{n},j,m\in \mathbb{N}_{0},k\in
\{1,...,n\},
\end{equation*}%
applying Lemmas \ref{duality2} and \ref{HN1},
with the help of \eqref{duh}, we estimate the second term of \eqref{est-th1} as follows:
\begin{equation*}
\sum_{j=0}^{\infty }\sum_{m=j+1}^{\infty} \int_{\mathbb{R}^{n}}
| \Pi _{j,m,k}(\partial _{k}f,V_{k}) h_{j}(x)| \,dx
\lesssim 
\big\|\partial _{k}f \big\|_{p_{1}(\cdot )}\big\|V_k\big\|_{B_{p_{2}(\cdot ),q(\cdot
)}^{s(\cdot )}}
\end{equation*}%
for any $k\in \{1,...,n\}$. 

let $K\in \mathbb{N}$ be such that $0<s^{-}\leq s^{+}<K$, by inequality (\ref{imp-ine}) and  Lemma \ref{HN2}\ with $a=0$ we have
\begin{equation*}
\sum_{j=0}^{\infty }\sum_{m=0}^{j}\int_{\mathbb{R}^{n}} |\Theta
_{j,m,K,k}(\partial _{k}f,V_{k})(x)h_{j}(x)|dx\lesssim \big\|\partial _{k}f%
\big\|_{p_{1}(\cdot )}\big\|V_{k}\big\|_{B_{p_{2}(\cdot ),q(\cdot
)}^{s(\cdot )}},
\end{equation*}%
for any $k\in \{1,...,n\}$.

Now, we estimate the term $I_{j,m,\alpha,k}$, from the support properties of $(\mathcal{F}%
\varphi _{j})_{j\in \mathbb{N}_{0}}$, we have $\theta _{j,\alpha }\ast
\partial _{k}f=\theta _{j,\alpha }\ast \tilde{\varphi}_{j}\ast \partial
_{k}f $, where $\tilde{\varphi}_{j}=\sum_{r=-2}^{r=2}\varphi _{j+r}$ and if $j<0$
we put $\varphi _{j}=0$ . Hence, for any  $0\leq m\leq j, k\in\{1,...,n\}$ and multiindex $\alpha$,
\begin{eqnarray*}
|I_{j,m,\alpha,k}| &\lesssim &2^{m-j}(\eta _{j,N}\ast |\tilde{\varphi}_{j}\ast
\partial _{k}f|)(\eta _{m,N}\ast| \varphi _{m}\ast V_{k}|), 
\end{eqnarray*}%
 where $N>n$ is sufficiently large, therefore 
\begin{align}\label{est3}
\sum_{j=0}^{\infty }\sum_{m=0}^{j}\int_{\mathbb{R}^{n}}|I_{j,m,\alpha,k}(x) h_{j}(x)|dx \lesssim 
\int_{\mathbb{R}^{n}} \sum_{j=0}^{\infty }\eta 
_{j,N}\ast |\tilde{\varphi}_{j}\ast \partial _{k}f|(x)\vartheta_j(x) |h_{j}(x)|dx, 
\end{align}%
where 
\begin{align*}
\vartheta_j(x):=\sum_{m=0}^{j} 2^{m-j}\eta _{m,N}\ast |\varphi _{m}\ast V_{k}|(x),
 j\in \mathbb{N}_{0}, x\in \mathbb{R}^{n}.
\end{align*}
Lemmas \ref{duality2}, \ref{Alm-Hastolemma1} and  inequalities (\ref{duh}), (\ref{eneq p norm}) and (\ref{Hold-inq}) yield
\begin{align*}
\sum_{j=0}^{\infty }\sum_{m=0}^{j}\int_{\mathbb{R}%
^{n}}|I_{j,m,\alpha,k}(x) h_{j}(x)|dx &\lesssim
\big\|(2^{js(\cdot)}\eta_{j,N}\ast |\tilde{\varphi}_{j}\ast \partial _{k}f|\vartheta_j)_{j\in \mathbb{N}_{0}}\big\|_{\ell ^{q(\cdot
)}(L^{p(\cdot )})}\\
&\lesssim \sup_{k} \big\|\vartheta_k\big\|_{p_{1}(\cdot )}\\
&\quad \quad\times\big\|(\eta_{j,N_1}\ast |2^{js(\cdot)}\tilde{\varphi}_{j}\ast \partial _{k}f|(\cdot)
)_{j\in \mathbb{N}_{0}}\big\|_{\ell ^{q(\cdot
)}(L^{p_2(\cdot )})}\\
&\lesssim \big\|V_{k}\big\|_{p_{1}(\cdot )}\big\|\partial_k f\big\|_{B_{p_{2}(\cdot
),q(\cdot )}^{s(\cdot )}},
\end{align*}
this completes the proof of the the first estimate of $\mathrm{\eqref{Est-th1.1}}$ with  $A=\big\|V\big\|_{p_{1}(\cdot )}%
\big\|\nabla f\big\|_{B_{p_{2}(\cdot ),q(\cdot )}^{s(\cdot )}}$.

Regarding the second estimate of $\mathrm{\eqref{Est-th1.1}}$, for every  $j\in \mathbb{N}_{0},k\in \{1,...,n\},$ and multiindex $\alpha$ we have
\begin{equation*}
\sum_{m=0}^{j}I_{j,m,\alpha,k}=\sum_{m=0}^{j} 2^{|\alpha |(m-j)}(\theta
_{j,\alpha }\ast \tilde{\varphi}_{j}\ast \partial _{k}f)(\partial ^{\alpha
}\varphi )_{m}\ast \varphi _{m}\ast V_{k},
\end{equation*}%
and
\begin{equation*}
|\theta _{j,\alpha }\ast \tilde{\varphi}_{j}\ast \partial _{k}f|\lesssim
2^{j}\eta _{j,N_1}\ast |\tilde{\varphi}_{j}\ast f|
\end{equation*}%
for some large $ N_1>n$. Since $|\alpha|\geq 1$ we have $\partial ^{\alpha
}\varphi=\partial ^{\alpha-e_\alpha
}\partial_{i_\alpha} \varphi$ where $e_\alpha$ is the $i_\alpha$-th canonical basis vector of $\mathbb{R}^n$, hence 
\begin{equation*}
(\partial ^{\alpha}\varphi )_{m}\ast \varphi _{m}\ast V_{k}=
2^{-m} (\partial ^{\alpha-e_{\alpha}}\varphi )_{m}\ast \varphi _{m}\ast\partial_{i_\alpha} V_{k},
\end{equation*}
it follows, when $|\alpha|>1$
\begin{equation*}
\sum_{m=0}^{j}|I_{j,m,\alpha
,k}|\lesssim 
\sum_{m=0}^{j} 2^{(|\alpha |-1)(m-j)}\eta _{j,N_1}\ast |\tilde{\varphi}_{j}\ast f|
\eta _{m,N_1}\ast|\partial_{i_\alpha} V_{k}|,
\end{equation*}%
and if $|\alpha|=1$ then $\alpha =e_{\alpha}$, from the properties of $(\mathcal{F}\varphi _{j})_{j\in \mathbb{N}_{0}}$, for every $j\in\mathbb{N}_0$ we have $\sum_{m=0}^{j}\varphi_{m}\ast \varphi _{m}=(\varphi_0)_j $, hence 
\begin{align*}
\left| \sum_{m=0}^{j}  I_{j,m,\alpha
,k}\right|&=\left| (\theta
_{j,\alpha }\ast \tilde{\varphi}_{j}\ast \partial _{k}f) \big(\sum_{m=0}^{j}\varphi_{m}\ast \varphi _{m}\big)\ast \partial_{i_\alpha} V_{k}\right|\\
&\lesssim \eta _{j,N_1}\ast |\tilde{\varphi}_{j}\ast f|\eta _{j,N_1}\ast|\partial_{i_\alpha} V_{k}|,
\end{align*}%
then with similar arguments as for (\ref{est3}) with $I_{j,m,\alpha ,k}$, for every multiindex $\alpha$ we have 
\begin{align*}
\sum_{j=0}^{\infty }\sum_{m=0}^{j}\int_{\mathbb{R}%
^{n}}|I_{j,m,\alpha,k}(x) h_{j}(x)|dx \lesssim \big\|\partial_{i_\alpha} V_{k}\big\|_{p_{1}(\cdot )}\big\| f\big\|_{B_{p_{2}(\cdot),q(\cdot )}^{s(\cdot )}},
\end{align*}
this proves the second estimate of $\mathrm{\eqref{Est-th1.1}}$ with  $A=\big\|\nabla V\big\|_{p_{1}(\cdot )}\big\| f\big\|_{B_{p_{2}(\cdot),q(\cdot )}^{s(\cdot )}}$.\\
\textbf{Step 3.} In this step we prove $\mathrm{\eqref{Est-th1.1.1}}$, for every $x\in \mathbb{R}^{n},j,m\in \mathbb{N}_{0}$ and $k\in\{1,...,n\}$,
\begin{align*}
\Pi _{j,m,k}(\partial _{k}f,V_{k})(x)&=
\varphi_j*(\partial_kf)\Psi_m*\varphi_m*V_k-\varphi_j*(\partial_kf\Psi_m*\varphi_m*V_k)\\
&= 2^j((\partial_k\varphi)_j*f)\Psi_m*\varphi_m*V_k 
-2^j(\partial_k\varphi)_j*(f\Psi_m*\varphi_m*V_k)\\
&\hspace{7cm}+\varphi_j*(f \Psi_m*\varphi_m*\partial_kV_k) 
\\
&=J_{j,m,k}^{1}(\partial
_{k}f,V_{k})(x)+J_{j,m,k}^{2}(\partial _{k}f,V_{k})(x),
\end{align*}%
where%
\begin{equation*}
J_{j,m,k}^{1}(\partial _{k}f,V_{k})(x):=\varphi_j*(f \Psi_m*\varphi_m*\partial_kV_k) 
\end{equation*}%
and%
\begin{equation*}
J_{j,m,k}^{2}(\partial _{k}f,V_{k})(x):=\int_{\mathbb{R}^{2n}}2^{j}(\partial
_{k}\varphi )_{j}(x-y)(\Psi _{m}(x-z)-\Psi _{m}(y-z))f(y)\varphi _{m}\ast
V_{k}(z)dydz.
\end{equation*}%
It follows,
\begin{equation*}
\left\|
\left( 2^{js(\cdot )}\sum_{m=0}^{\infty
}\sum_{k=1}^{n}J_{j,m,k}^{1}(\partial _{k}f,V_{k})\right)_{j\in \mathbb{N}_{0}}%
\right\|_{\ell ^{q(\cdot)}(L^{p(\cdot )})}=\big\|f\mathrm{div}(V)\big\|%
_{B_{p(\cdot ),q(\cdot )}^{s(\cdot )}}.
\end{equation*}%
we see that $J_{j,m,k}^{2}(\partial _{k}f,V_{k})=2^{j}\Lambda
_{j,m}(f,V_{k})$ but with $(\partial _{k}\varphi )_{j}$ in place of $\varphi
_{j}$. Using the same type of arguments as in Step 2 we see that%
\begin{align*}
\left\|\left(2^{js(\cdot )}\sum_{m=0}^{\infty
}\sum_{k=1}^{n}J_{j,m,k}^{2}(\partial _{k}f,V_{k})\right)_{j\in \mathbb{N}_{0}}%
\right\|_{\ell ^{q(\cdot)}(L^{p(\cdot )})}
 &\lesssim \big\|\nabla V\big\|_{p_{1}(\cdot )}\big\|f\big\|_{B_{p_{2}(\cdot
),q(\cdot )}^{s(\cdot )}}
\\
&\hspace{2.5cm}+\big\|f\big\|_{p_{1}(\cdot )}\big\|V\big\|%
_{B_{p_{2}(\cdot ),q(\cdot )}^{s(\cdot )+1}}.
\end{align*}%
The proof is completed.
\end{proof}

The next statement is an improvement of $\mathrm{\eqref{Est-th1.1}}$ 
with $0<s^{-}\leq s^{+}<1$ and of $\mathrm{%
\eqref{Est-th1.1.1}}$ with $-1<s^{-}\leq s^{+}<0$.
\begin{theorem}
\label{result2}Let $s\in C_{\mathrm{loc}}^{\log }(\mathbb{R}^{n}),p,p_{1},p_{2}\in 
\mathcal{P}^{\log }(\mathbb{R}^{n})$  and $q\in \mathcal{P} (\mathbb{R}^{n})$ with $\frac{1}{q}\in C_{\mathrm{loc}}^{\log }(\mathbb{R}^{n}) $ and $1/p=1/p_1+1/p_2$. Let $V=(V_{1},...,V_{n})\in \left( \mathcal{S}(\mathbb{R}^{n})\right) ^{n}$
be a vector field. Then for any $f\in \mathcal{S}(\mathbb{R}^{n})$%
\begin{equation*}
\big\|( 2^{js(\cdot )}[V\cdot \nabla ,\Delta _{j}]f )_{j\in \mathbb{N}_{0}}\big\|_{\ell ^{q(\cdot)}(L^{p(\cdot )})}
\lesssim \big\|\nabla f%
\big\|_{p_{1}(\cdot )}\big\|V\big\|_{B_{p_{2}(\cdot ),q(\cdot )}^{s(\cdot
)}}
\end{equation*}%
if  $0<s^{-}\leq s^{+}<1$, and%
\begin{equation*}
\big\|( 2^{js(\cdot )}[V\cdot \nabla ,\Delta _{j}]f )_{j\in \mathbb{N}_{0}}\big\|_{\ell ^{q(\cdot)}(L^{p(\cdot )})}\lesssim \big\|f\mathrm{div}%
(V)\big\|_{B_{p(\cdot ),q(\cdot )}^{s(\cdot )}}+\big\|f\big\|_{p_{1}(\cdot )}%
\big\|V\big\|_{B_{p_{2}(\cdot ),q(\cdot )}^{s(\cdot )+1}}
\end{equation*}%
if $-1<s^{-}\leq s^{+}<0.$
\end{theorem}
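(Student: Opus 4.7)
The strategy is to revisit the proof of Theorem \ref{result1} and exploit the restricted ranges of $s$ to take $K=1$ in the Taylor-type expansion $\mathrm{\eqref{sum1}}$. Since the sum $\sum_{1\leq|\alpha|<K}$ is empty for $K=1$, the identity reduces to $\Pi_{j,m,k}=\Theta_{j,m,1,k}$, so the intermediate terms $I_{j,m,\alpha,k}$ that produced the $A$-term in $\mathrm{\eqref{Est-th1.1}}$ and the $\|\nabla V\|_{p_{1}(\cdot)}\|f\|_{B_{p_{2}(\cdot),q(\cdot)}^{s(\cdot)}}$-term in $\mathrm{\eqref{Est-th1.1.1}}$ simply do not appear. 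As in Step~1 of Theorem \ref{result1}, Lemma \ref{duality2} reduces the task in both cases to bounding $\sum_{j}\int|[V\cdot\nabla,\Delta_{j}]f(x)\,h_{j}(x)|\,dx$ for sequences $(h_{j})_{j\in\mathbb{N}_{0}}$ satisfying $\mathrm{\eqref{duh}}$.

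For the first estimate $(0<s^{-}\leq s^{+}<1)$, I decompose $[V\cdot\nabla,\Delta_{j}]f=\sum_{m=0}^{j}(\cdots)+\sum_{m=j+1}^{\infty}(\cdots)$ and note that $\Pi_{j,m,k}(\partial_{k}f,V_{k})=\Lambda_{j,m}(\partial_{k}f,V_{k})$ for a smooth dyadic resolution. The high-frequency block is handled by Lemma \ref{HN1} with $a=0$, whose hypothesis $(s+a)^{-}>0$ coincides with $s^{-}>0$, yielding $\|\partial_{k}f\|_{p_{1}(\cdot)}\|V_{k}\|_{B_{p_{2}(\cdot),q(\cdot)}^{s(\cdot)}}$. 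For the low-frequency block, $\mathrm{\eqref{sum1}}$ with $K=1$ gives $|\Pi_{j,m,k}|=|\Theta_{j,m,1,k}|\lesssim E_{j,m,1}(\partial_{k}f,V_{k})$ by $\mathrm{\eqref{imp-ine}}$, and Lemma \ref{HN2} with $a=0$ and $K=1$, whose hypothesis $(s+a)^{+}<K$ becomes $s^{+}<1$, delivers the same bound. Summation over $k\in\{1,\ldots,n\}$ closes this case.

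For the second estimate $(-1<s^{-}\leq s^{+}<0)$, I use the decomposition $\Pi_{j,m,k}(\partial_{k}f,V_{k})=J_{j,m,k}^{1}+J_{j,m,k}^{2}$ from Step~3 of Theorem \ref{result1}. The identity $V_{k}=\sum_{m}\Psi_{m}\ast\varphi_{m}\ast V_{k}$ collapses the first piece: $\sum_{k=1}^{n}\sum_{m=0}^{\infty}J_{j,m,k}^{1}=\varphi_{j}\ast(f\,\mathrm{div}(V))$, which contributes exactly $\|f\,\mathrm{div}(V)\|_{B_{p(\cdot),q(\cdot)}^{s(\cdot)}}$ after multiplication by $2^{js(\cdot)}$ and taking the $\ell^{q(\cdot)}(L^{p(\cdot)})$-norm. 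The second piece has the same structure as $2^{j}\Lambda_{j,m}(f,V_{k})$ with $(\partial_{k}\varphi)_{j}$ in place of $\varphi_{j}$, and since $|(\partial_{k}\varphi)_{j}|\lesssim\eta_{j,N}$, the proofs of Lemmas \ref{HN1} and \ref{HN2} apply to this modified object without change. Splitting once more in $m\leq j$ and $m>j$, and invoking $\mathrm{\eqref{sum1}}$ with $K=1$ on the low-frequency block so that only the $\Theta$ contribution survives, Lemma \ref{HN1} with $a=1$ (condition $(s+1)^{-}>0$, i.e.\ $s^{-}>-1$) and Lemma \ref{HN2} with $a=1$, $K=1$ (condition $(s+1)^{+}<1$, i.e.\ $s^{+}<0$) together yield $\|f\|_{p_{1}(\cdot)}\|V_{k}\|_{B_{p_{2}(\cdot),q(\cdot)}^{s(\cdot)+1}}$.

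The main obstacle is conceptual rather than computational: recognizing that the sharper ranges $s^{+}<1$ (respectively $s^{+}<0$) correspond exactly to the minimal admissible value $K=1$ in Lemma \ref{HN2} applied with $a=0$ (respectively $a=1$), which is precisely what kills the $I_{j,m,\alpha,k}$-terms. Once this observation is made, the rest is a line-by-line specialization of Steps~2 and~3 of Theorem \ref{result1}, using Lemmas \ref{HN1}, \ref{HN2}, \ref{thm compar}, \ref{Alm-Hastolemma1} and the duality result of Lemma \ref{duality2} as before.
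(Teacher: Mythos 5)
Your proposal is correct and follows exactly the same route as the paper: specialize the decomposition \eqref{sum1} to $K=1$ so that the $I_{j,m,\alpha,k}$ terms vanish, then invoke Lemma \ref{HN1}/\ref{HN2} with $a=0$ for the first estimate and, after the $J^{1}/J^{2}$-splitting of Step~3, with $a=1$, $K=1$ for the second. The paper's proof is just the one-line remark ``follows by Steps 1--2 with $K=1$, $a=0$ / by Step 3,'' and your write-up is a faithful, correct expansion of that remark, including matching the admissibility windows $(s+a)^{-}>0$ and $(s+a)^{+}<K$ to the stated ranges of $s$.
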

\begin{proof}
 The first estimate follows by Steps 1-2 of Theorem \ref%
{result1}, with $K=1$ and $a=0$, while the second one follows by the same
arguments of Step 3 in Theorem \ref{result1}.
\end{proof}

The next theorem presents commutator estimates with various indices for variable  Besov spaces.
\begin{theorem}
\label{result3}
Let $s,s_1,s_2\in C_{\mathrm{loc}}^{\log }(\mathbb{R}^{n}) ,p,p_{1},p_{2}\in 
\mathcal{P}^{\log }(\mathbb{R}^{n})$ and $q,q_1,q_2 \in \mathcal{P}(\mathbb{R}^{n}) $ with
 $1/q,1/q_1,1/q_2\in C_{\mathrm{loc}}^{\log }(\mathbb{R}^{n}) $, $s=s_1+s_2, s^{-}>0, s_2^+<1$, $1/p=1/p_1+1/p_2$ and $1/q=1/q_1+1/q_2$. Then for any $f\in \mathcal{S}(\mathbb{R}^{n})$%
\begin{align*}
\big\|( 2^{js(\cdot )}[V\cdot \nabla ,\Delta _{j}]f )_{j\in \mathbb{N}_{0}}\big\|_{\ell ^{q(\cdot)}(L^{p(\cdot )})}
\lesssim 
 \big\|\nabla f\big\|_{p_{1}(\cdot )}\big\|V\big\|_{B_{p_{2}(\cdot ),q(\cdot )}^{s(\cdot
)}} + 
\big\|\nabla f \big\|_{B_{p_{1}(\cdot ),q_1(\cdot )}^{s_1(\cdot)}}
\big\|V \big\|_{B_{p_{2}(\cdot ),q_2(\cdot )}^{s_2(\cdot)}}.
\end{align*}%
\end{theorem}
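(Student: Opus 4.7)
The plan is to follow the decomposition and duality scheme of Steps~1--2 of Theorem~\ref{result1}, isolating as before the high-frequency ($m>j$) and low-frequency ($m\le j$) parts of $\Pi_{j,m,k}(\partial_k f,V_k)$, but to exploit the splitting $s=s_1+s_2$ with $s_2^+<1$ in the low-frequency part so that the smoothness can be spread across both factors via the generalized H\"older inequality~\eqref{Hold-GEN} and the Hardy-type Lemma~\ref{thm compar}. First I would reduce, via Lemma~\ref{duality2}, to estimating $\int_{\mathbb{R}^n}\sum_j|[V\cdot\nabla,\Delta_j]f(x)\,h_j(x)|\,dx$ for test sequences satisfying $\|(2^{-js(\cdot)}h_j)_j\|_{\ell^{q'(\cdot)}(L^{p'(\cdot)})}\le 1$.

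For the range $m\ge j+1$, Lemma~\ref{HN1} with $a=0$ applied to $f=\partial_k f$, $g=V_k$ and exponents $p_1,p_2,q$ immediately produces $\|\partial_k f\|_{p_1(\cdot)}\|V_k\|_{B^{s(\cdot)}_{p_2(\cdot),q(\cdot)}}$, i.e., the first term in the claimed bound after summing over $k$. For the range $m\le j$, I would fix an integer $K>s^+$ and invoke \cite[Lemma~3.1]{HN18} to write $\Pi_{j,m,k}=\sum_{1\le|\alpha|<K}I_{j,m,\alpha,k}+\Theta_{j,m,K,k}$, with $|\Theta_{j,m,K,k}|\lesssim E_{j,m,K}$. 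The remainder $\Theta_{j,m,K,k}$ is absorbed in the first term by a direct application of Lemma~\ref{HN2} with $a=0$, which is legal thanks to the choice $K>s^+$.

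The new step is the estimate of the polynomial correction terms $I_{j,m,\alpha,k}$ with $1\le|\alpha|<K$. The support identity $\theta_{j,\alpha}\ast\partial_k f=\theta_{j,\alpha}\ast\tilde\varphi_j\ast\partial_k f$ combined with the Schwartz bounds $|\theta_{j,\alpha}|,|(\partial^\alpha\varphi)_m|\lesssim\eta_{\cdot,N}$ gives
\begin{equation*}
|I_{j,m,\alpha,k}(x)|\lesssim 2^{|\alpha|(m-j)}\bigl(\eta_{j,N}\ast|\tilde\varphi_j\ast\partial_k f|\bigr)(x)\bigl(\eta_{m,N}\ast|\varphi_m\ast V_k|\bigr)(x).
\end{equation*}
Redistributing the weight as $2^{js(x)}\cdot 2^{|\alpha|(m-j)}=2^{js_1(x)}\cdot 2^{ms_2(x)}\cdot 2^{-(j-m)(|\alpha|-s_2(x))}$ and using $|\alpha|-s_2(x)\ge 1-s_2^+=:\epsilon>0$ produces a summable geometric factor $2^{-(j-m)\epsilon}$. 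After moving $2^{js_1(x)}$ and $2^{ms_2(x)}$ inside the respective $\eta$-convolutions via Lemma~\ref{DHR-lemma}, I would sum in $m$ via Lemma~\ref{thm compar}, apply Lemma~\ref{Alm-Hastolemma1} together with the definition of the Besov norm, and combine the two resulting factors through the generalized H\"older inequality~\eqref{Hold-GEN} with $1/p=1/p_1+1/p_2$, $1/q=1/q_1+1/q_2$; closing with Lemma~\ref{duality2} then delivers the second term $\|\partial_k f\|_{B^{s_1(\cdot)}_{p_1(\cdot),q_1(\cdot)}}\|V_k\|_{B^{s_2(\cdot)}_{p_2(\cdot),q_2(\cdot)}}$. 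The identification of $\|(\eta_{j,N}\ast(2^{js_1(\cdot)}|\tilde\varphi_j\ast\partial_k f|))_j\|_{\ell^{q_1(\cdot)}(L^{p_1(\cdot)})}$ with the Besov norm of $\partial_k f$ uses that $\tilde\varphi_j=\sum_{r=-2}^{2}\varphi_{j+r}$ is a bounded shift of the $j$-th Littlewood--Paley window and that $s_1\in L^\infty$.

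The main obstacle I foresee is the book-keeping of the parameters $N,R$ from Lemma~\ref{DHR-lemma} together with the log-H\"older constants of $s_1,s_2,1/q_1,1/q_2$, all of which must be chosen large enough simultaneously for Lemmas~\ref{DHR-lemma}, \ref{Alm-Hastolemma1} and \ref{thm compar} to apply in concert. The essential numerical input, however --- namely $|\alpha|-s_2^+>0$ --- is already encoded in the hypotheses $|\alpha|\ge 1$ and $s_2^+<1$, so the Hardy summation converges and no further constraint on $s_1^\pm$ or on $p_1,p_2$ beyond the standing class assumptions is required.
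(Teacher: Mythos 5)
Your proposal is correct and follows essentially the same route as the paper: reduce by the duality Lemma~\ref{duality2}, dispose of the $m>j$ range via Lemma~\ref{HN1} and the Taylor remainder via Lemma~\ref{HN2} (both with $a=0$), then treat the correction terms $I_{j,m,\alpha,k}$ by redistributing $2^{js(x)}=2^{js_1(x)}2^{ms_2(x)}2^{-(j-m)s_2(x)}$, gaining geometric decay from $s_2^+<1$, moving the exponential weights into the convolutions by Lemma~\ref{DHR-lemma}, summing in $m$ with the Hardy-type Lemma~\ref{thm compar}, controlling the $\eta$-convolutions by Lemma~\ref{Alm-Hastolemma1}, and finally splitting the product of the two Besov-type factors with the generalized H\"older inequality~\eqref{Hold-GEN}. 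The only cosmetic difference is that you keep the factor $2^{|\alpha|(m-j)}$ while the paper first bounds it by $2^{m-j}$; since $|\alpha|\ge 1$ both yield the same decay rate $1-s_2^+$, so this changes nothing.
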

\begin{proof} We employ the results of  Step 2 of Theorem \ref{result1}, it suffices to estimate $I_{j,m,\alpha,k}$ 
. By the estimate (\ref{est3}) with $s=s_1+s_2$ and Lemma \ref{DHR-lemma}, for every $x\in \mathbb{R}^{n},k\in
\{1,...,n\}$ and multiindex $\alpha$, 
\begin{align*}
\sum_{j=0}^{\infty }\sum_{m=0}^{j}\int_{\mathbb{R}%
^{n}}|I_{j,m,\alpha,k}(x) h_{j}(x)|dx &\lesssim 
\int_{\mathbb{R}^{n}} \sum_{j=0}^{\infty }\eta 
_{j,N_1}\ast |2^{js_1(\cdot)}\tilde{\varphi}_{j}\ast \partial _{k}f|(x)\\
&\hspace{3cm}\times 2^{js_2(x)}\vartheta_j(x) 2^{-js(x)}|h_{j}(x)|dx,
\end{align*}%
and for every $x\in \mathbb{R}^{n}$ and $j\in\mathbb{N}_0$, we have
\begin{align*}
2^{js_2(x)}\vartheta_j(x)&=\sum_{m=0}^{j} 2^{(m-j)(1-s_2(x))}2^{m s_2(x)}\eta _{m,N}\ast |\varphi _{m}\ast V_{k}|(x)\\
&\lesssim 
\sum_{m=0}^{j} 2^{(m-j)(1-s_2^+)}\eta _{m,N_2}\ast |2^{m s_2(\cdot)}\varphi _{m}\ast V_{k}|(x),
\end{align*}
 where $N_1,N_2>n$ are sufficiently large.  Since $s_2^+<1$, by Lemmas \ref{thm compar} and \ref{Alm-Hastolemma1},
\begin{equation*}
\big\|(2^{js_2(\cdot)} \vartheta_j(\cdot))_{j\in \mathbb{N}_{0}}\big\|_{\ell ^{q_2(\cdot)}(L^{p_2(\cdot )})}\lesssim 
\big\|V_k \big\|_{B_{p_{2}(\cdot ),q_2(\cdot )}^{s_2(\cdot)}},
\end{equation*}
by Lemma \ref{duality2} and inequalities (\ref{duh}), (\ref{Hold-GEN}), for every $k\in\{1,...,n\}$ we have
\begin{align*}
\sum_{j=0}^{\infty }\sum_{m=0}^{j}\int_{\mathbb{R}%
^{n}}|I_{j,m,\alpha,k}(x)h_{j}(x)|dx &\lesssim 
\big\|(2^{js_2(\cdot)} \vartheta_j(\cdot) \eta_{j,N_1}\ast |2^{js_1(\cdot)}\tilde{\varphi}_{j}\ast \partial _{k}f|
)_{j\in \mathbb{N}_{0}}\big\|_{\ell ^{q(\cdot
)}(L^{p(\cdot )})}\\
&\lesssim 
\big\|(2^{js_2(\cdot)} \vartheta_j(\cdot))_{j\in \mathbb{N}_{0}}\big\|_{\ell ^{q_2(\cdot)}(L^{p_2(\cdot )})}\\
&\hspace{1.5cm}\times\big\|(
\eta_{j,N_1}\ast |2^{js_1(\cdot)}\tilde{\varphi}_{j}\ast \partial _{k}f|
)_{j\in \mathbb{N}_{0}}\big\|_{\ell ^{q_1(\cdot)}(L^{p_1(\cdot )})}\\
&\lesssim \big\|\partial _{k}f \big\|_{B_{p_{1}(\cdot ),q_1(\cdot )}^{s_1(\cdot)}}
\big\|V_k \big\|_{B_{p_{2}(\cdot ),q_2(\cdot )}^{s_2(\cdot)}}.
\end{align*}%
\end{proof}

  Corresponding statements to Theorems \ref{result1}, \ref{result2} and \ref{result3}  were presented in theorems
1.1, 1.2, and 1.3 of \cite{HN18} for classical Triebel–Lizorkin and Besov spaces $F_{p,q}^{s}$ and $B_{p,q}^{s}$.
 In \cite[Section 6.3]{HN18} the corresponding results of Theorems \ref{result1}, \ref{result2} and \ref{result3} were presented for variable exponent  Triebel–Lizorkin and Besov spaces $F_{p(\cdot ),q}^{s}$ and $B_{p(\cdot ),q}^{s}$ under the assumptions that $s$ is  constant and $q\in[1,\infty]$ is  constant  with $p^->1$ and $p^+<\infty$. 
For variable Triebel–Lizorkin spaces $F_{p(\cdot ),q(\cdot)}^{s(\cdot)}$,
 the corresponding theorems to Theorems \ref{result1} and \ref{result2} were presented in theorems 1 and 2 of  \cite{art1}, respectively. An extension of these estimates to the general case $s\in C_{\mathrm{loc}}^{\log }(\mathbb{R}^n)$ is still open.


\begin{thebibliography}{}

 \bibitem{AlCa152}  Almeida, A., Caetano, S.: Atomic and molecular decompositions in variable exponents 2-microlocal spaces and applications, J. Funct. Anal. \textbf{270} (2016), 1888--1921.


 \bibitem{AH}  Almeida, A.,   H\"{a}st\"{o}, P.: Besov spaces with variable smoothness and integrability, J. Funct. Anal. \textbf{258}, 1628-1655(2010)

\bibitem{BCD} Bahouri, H., Chemin, J.-Y., Danchin, R.: Fourier analysis and
nonlinear partial Differential equations, Grundlehren der mathematischen
Wissenschaften, vol. 343. Springer, Heidelberg 2011 

\bibitem{Phd-ths} Benmahmoud, S.: Some properties of function spaces of Besov-type and applications.  PhD thesis, Mouhamed Boudiaf university, M'sila(2022)

\bibitem{art1}  Benmahmoud, S.,  Drihem, D.:  Commutator estimates for vector fields on variable Triebel–Lizorkin spaces. Rend. Circ. Mat. Palermo, II. Ser (2021). https://doi.org/10. 1007/s12215-021-00657-z


\bibitem{Ch02}  Chae, D.: On the well-posedness of the Euler equations
in the Triebel-Lizorkin spaces, Comm. Pure Appl. Math., \textbf{55}(5), 654-678(2002)

\bibitem{Chen06} Chen, Y., Levine, S., Rao, R.: Variable exponent,
linear growth functionals in image restoration, SIAM J. Appl. Math., 
\textbf{66} (4) , 1383-1406(2006)

\bibitem{CF} Cruz-Uribe, D., Fiorenza, A.: Variable Lebesgue spaces.
Applied and Numerical Harmonic Analysis. Birkhauser/Springer, Heidelberg. Foundations and harmonic analysis(2013)


\bibitem{RD1}  Danchin, R.: Estimates in Besov spaces for transport and transport-diffusion equations with almost
Lipschitz coefficients.Rev.Mat.Iberoamericana21(3),863–888 (2005)


\bibitem{Di} Diening, L.: Maximal function on generalized Lebesque
spaces $L^{p(\cdot )}$, Math. Inequal. Appl., \textbf{7} (2) ,
245-253(2004)


\bibitem{DHHMS}  Diening L., Harjulehto, P., H\"{a}st\"{o}, P.,  Mizuta, Y., Shimomura, T.: Maximal functions in variable exponent spaces: limiting cases of the exponent. Ann. Acad. Sci. Fenn. Math. \textbf{34}(2), 503-522\ (2009)

\bibitem{DHHR} Diening, L., Harjulehto, P., H\"{a}st\"{o}, P.,  R\r{u}\v{z}%
i\v{c}ka, M.: Lebesgue and Sobolev spaces with variable\ exponents, Lecture
Notes in Mathematics, vol. 2017, Springer-Verlag, Berlin(2011)

\bibitem{DHR} Diening, L., Hästö, P., Roudenko, S.: Function
spaces of variable smoothness and integrability, J. Funct. Anal., \textbf{%
256} (6), 1731-1768(2009)

 \bibitem{D3}  Drihem, D.: Atomic decomposition of Besov spaces with variable smoothness and integrability, J. Math. Anal. Appl. \textbf{389}, 15-31(2012)

\bibitem{HN18} Hart, J., Naibo, V.: On certain commutator estimates for vector fields. J Geom Anal., \textbf{28}, 1202-1232(2018)

\bibitem{Dua1} Izuki, M., Noi, T.: Duality of Besov, Triebel–Lizorkin and Herz spaces with variable exponents. Rend. Circ. Mat. Palermo 63, 221–245 (2014). https://doi.org/10.1007/s12215-014-0154-x

\bibitem{KV122}  Kempka, H., and  Vyb\'{\i}ral, J.: Spaces of variable smoothness and integrability: Characterizations by local means and ball means of differences, J. Fourier Anal. Appl., \textbf{18} (4),
852-891(2012)

\bibitem{Orli-spa}  Musielak, J.: Orlicz Spaces and Modular Spaces, Lecture Notes in Mathematics, \textbf{1034}, Springer-Verlag, Berlin(1983)

\bibitem{Ru00}  R\r{u}\v{z}i\v{c}ka, M.: Electrorheological fluids: modeling and mathematical theory, Lecture Notes in Mathematics, vol. 1748, Springer-Verlag, Berlin(2000)

\bibitem{T1}  Triebel, H.: Theory of function spaces, Birkh\"{a}user, Basel
(1983)

\bibitem{T2} Triebel, H.: Theory of function spaces, II, Birkh\"{a}user,
Basel(1992)

\bibitem{T4} Triebel, H.: Theory of function spaces, III, Birkh\"{a}user,
Basel(2006)

\bibitem{X1} Xu, J.-S.: Variable Besov and Triebel-Lizorkin spaces, Ann.Acad.Sci.Fenn.Math., \textbf{33} , 511-522(2008)
\end{thebibliography}
\end{document}